\newtheorem{Proposition}{Proposition}
\newtheorem{Lemma}{Lemma}
\newtheorem{Theorem}{Theorem}
\newcommand{\E}{{\mathbb{E}}}
\renewcommand{\P}{{\mathbb{P}}}
\newcommand{\FF}{{\mathcal{F}}}
\newcommand{\Var}{\mathrm{Var}}
\newcommand{\1}{\mathbbm{1}}
\newcommand{\subsectionnewline}{\mbox{}\medskip}
\DeclareMathOperator*{\argmin}{arg\,min}
\let\oldmarginpar\marginpar
\renewcommand\marginpar[1]{\-\oldmarginpar[\raggedleft\footnotesize #1]%
{\raggedright\footnotesize #1}}
\begin{document}


\title[Quickest Online Selection]%
{Quickest Online Selection of an Increasing Subsequence of Specified Size}
\author[A. Arlotto, E. Mossel, and Steele, J. M.]{Alessandro Arlotto, Elchanan Mossel,\\ and J. Michael Steele}

\thanks{A. Arlotto: The Fuqua School of Business, Duke University, 100 Fuqua Drive, Durham, NC, 27708.
Email address: \texttt{alessandro.arlotto@duke.edu}}

\thanks{E. Mossel:  Wharton School, Department of Statistics, Huntsman Hall
459, University of Pennsylvania, Philadelphia, PA 19104 and Department of Statistics, University of California, Berkeley, 401 Evans Hall,
Berkeley, CA 94720.
Email address: \texttt{mossel@wharton.upenn.edu}}

\thanks{J. M.
Steele:  Wharton School, Department of Statistics, Huntsman Hall
447, University of Pennsylvania, Philadelphia, PA 19104.
Email address: \texttt{steele@wharton.upenn.edu}}

\begin{abstract}

        Given a sequence of independent random variables with a common continuous distribution, we consider
        the online decision problem where one seeks to \emph{minimize the expected value of the time}
        that is needed to complete the selection of a monotone increasing subsequence of a prespecified length
        $n$.
        This problem is dual to some
        online decision problems that have been considered earlier,
        and this dual problem has some notable advantages. In particular,
        the recursions and equations of optimality lead
        with relative ease to asymptotic formulas for mean and variance of the minimal selection time.

        \smallskip
        {\sc Mathematics Subject
        Classification (2010)}: Primary: 60C05, 90C40; Secondary: 60G40, 90C27, 90C39

        \smallskip
        {\sc Key Words:} Increasing subsequence problem, online selection, sequential selection, time-focused decision problem,
        dynamic programming, Markov decision problem.
\end{abstract}

\date{
first version: December 26, 2014;
this version: August 9, 2015. 
}
\maketitle

\section{Increasing Subsequences and Time-Focused Selection}

If $X_1, X_2, \ldots $ is a sequence of independent random variables with a common continuous distribution $F$, then
\begin{equation*}\label{eq:Ln-offline}
L_n= \max \{k: X_{i_1} < X_{i_2} < \cdots < X_{i_k}, \text{ where } 1 \leq i_1 < i_2 < \cdots < i_k\leq n\}
\end{equation*}
represents the length of the longest monotone increasing subsequence
in the sample $\{X_1, X_2, \ldots, X_n\}$.
This random variable has been subject to a remarkable sequence of investigations beginning with \citet{Ulam1961}
and culminating with
\citet{BaiDeiKur:JAMS1999} where it was found that $n^{-1/6}(L_n-2\sqrt{n})$ converges in distribution to the
Tracy-Widom law, a new universal law that was introduced just a few years earlier in \citet{TracyWidom1994}.
The review of \citet{AldDia:AMS1999} and the monograph of \citet{Rom:CUP2015} draw rich
connections between this increasing subsequence problem and topics
as diverse as card sorting, triangulation of Riemann surfaces, and --- especially --- the theory of integer partitions.

Here we consider a new kind of decision problem where one
seeks to select
\emph{as quickly as possible}
an increasing subsequence of a prespecified length $n$.
More precisely, at time $i$, when
the decision maker is first presented with $X_i$, a decision must be made either
to accept $X_i$ a member of the selected subsequence or else to reject $X_i$ forever.
The decision at time $i$ is assumed to be a deterministic function of
the observations $\{X_1, X_2, \ldots, X_i\}$, so the times $1\leq \tau_1 < \tau_2 < \cdots < \tau_n$
of affirmative selections give us a strictly increasing sequence stopping times that are adapted to the sequence of
$\sigma$-fields $\FF_i=\sigma\{X_1, X_2, \ldots, X_i\}$, $1 \leq i < \infty.$
The quantity of most interest is
\begin{equation}\label{def:beta}
\beta(n) := \min_\pi \E[\tau_n]
\end{equation}
where the minimum is over all sequences $\pi=(\tau_1, \tau_2,  \ldots, \tau_n)$  of stopping times such that
$$
1\leq \tau_1 < \tau_2 < \cdots < \tau_n  \quad \text{and} \quad
X_{\tau_1}< X_{\tau_2} < \cdots < X_{\tau_n}.
$$
Such a sequence $\pi$ will be called a \emph{selection policy}, and the set of all such selection policies with $\E [\tau_n] < \infty$
will be denoted by $\Pi(n)$.

It is useful to note that the value of $\beta(n)$ is not changed if we replace each $X_i$
with $F^{-1}(X_i)$, so we may as well assume from the beginning that the $X_i$'s are all uniformly distributed on $[0,1]$.
Our main results concern the behavior of $n \mapsto \beta(n)$
and the structure of the policy that attains the minimum \eqref{def:beta}.

\begin{Theorem}\label{thm:BetaAsym} The function
\begin{equation*}
n \mapsto \beta(n) = \min_{\pi\in \Pi(n)} \E[ \tau_n]
\end{equation*}
is convex, $\beta(1) = 1$,
and for all $n \geq 2$ one has the bounds
\begin{equation}\label{eq:betalimit}
\frac{1}{2} n^2 \leq \beta(n) \leq \frac{1}{2} n^2 + n \log n.
\end{equation}
\end{Theorem}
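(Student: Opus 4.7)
My plan is to collapse the Bellman dynamic program to a single scalar recursion by exploiting a self-similarity of the optimal value function, and then extract all four conclusions from that recursion. Let $v_k(u)$ denote the minimum expected time to complete $k$ further selections starting from state $u$ (the last-selected value), so $v_0 \equiv 0$ and $\beta(n) = v_n(0)$. A direct scale-invariance argument -- the observations exceeding $u$ form, after geometric thinning, an iid uniform sequence on $[u,1]$ with mean inter-arrival time $1/(1-u)$, and the continuation problem is an affine rescaling of the original -- forces
\[
v_k(u) = \frac{f(k)}{1-u}, \qquad f(k) := v_k(0) = \beta(k).
\]
Inserting this into the Bellman equation $(1-u)v_k(u) = 1 + \int_u^1 \min\{v_{k-1}(x), v_k(u)\}\, dx$ with the optimal threshold $c$ characterised by $v_{k-1}(c) = v_k(u)$ (equivalently $1-c = (1-u) f(k-1)/f(k)$) and computing the integral reduces the system to the scalar recursion
\[
f(k) - f(k-1) = 1 + f(k-1)\log\bigl(f(k)/f(k-1)\bigr), \qquad f(1) = 1,
\]
from which $\beta(1) = 1$ is immediate.

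Convexity of $\beta$ is monotonicity of the increments $\delta_k := f(k) - f(k-1)$. Writing the recursion as $G(\delta, F) := \delta - 1 - F\log(1+\delta/F) = 0$ with $F := f(k-1)$, so that $\delta_k$ is the unique positive zero $\delta^*(F)$, implicit differentiation together with the standard inequality $\log(1+x) > x/(1+x)$ for $x > 0$ yields $\partial_\delta G = \delta/(F+\delta) > 0$ and $\partial_F G = \delta/(F+\delta) - \log(1+\delta/F) < 0$. Hence $d\delta^*/dF > 0$, and since $f$ is itself strictly increasing in $k$, the sequence $\delta_k$ is increasing, which is convexity of $\beta$.

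The two numerical bounds both rest on sharp rational estimates of $\phi(y) := y - \log(1+y)$, namely
\[
\frac{y^2}{2(1+y)} \le \phi(y) \le \frac{y^2}{2+y}, \qquad y \ge 0,
\]
each verified by noting that both sides vanish at $y=0$ and computing the sign of the derivative of the difference (both collapse to simple positive rationals in $y$). Setting $y = \delta/F$, the recursion becomes $\phi(y) = 1/F$, and solving the two resulting quadratic inequalities gives
\[
\tfrac{1}{2} + \sqrt{2F} \;\le\; \tfrac{1}{2}\bigl(1 + \sqrt{1+8F}\bigr) \;\le\; \delta \;\le\; 1 + \sqrt{1+2F}.
\]
The left-hand estimate drives the induction for the lower bound: if $f(n-1) \ge (n-1)^2/2$, then $\sqrt{2f(n-1)} \ge n-1$, hence $\delta_n \ge n - 1/2$, and therefore $f(n) \ge (n-1)^2/2 + (n - 1/2) = n^2/2$.

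For the upper bound, the right-hand estimate $\delta \le 1 + \sqrt{1+2F}$ combined with $\sqrt{a^2 + b} \le a + b/(2a)$ yields, under the hypothesis $f(n-1) \le (n-1)^2/2 + (n-1)\log(n-1)$, the bound $\delta_n \le n + \log(n-1) + 1/(2(n-1))$, and hence $f(n) \le n^2/2 + 1/2 + n\log(n-1) + 1/(2(n-1))$. The induction closes because the excess $n\log(n/(n-1)) - 1/2 - 1/(2(n-1))$ is positive for all $n \ge 2$ (attaining its minimum $2\log 2 - 1 > 0$ at $n = 2$ and tending to $1/2$ as $n \to \infty$). The subtle step of the whole argument is the sharpness of the rational bounds for $\phi$: the familiar $\phi(y) \le y^2/2$ would only yield $\delta \ge \sqrt{2F}$, which falls a half-unit short of $n^2/2$ in the inductive step, so the improvement to $y^2/(2+y)$ is essential, and its elementary but nontrivial proof is where I would expect the main technical attention.
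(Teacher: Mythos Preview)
Your argument is correct apart from one small omission in the upper-bound induction: the base case. Your inductive step goes from $n-1$ to $n$ under the hypothesis $f(n-1)\le (n-1)^2/2+(n-1)\log(n-1)$, but at $n-1=1$ this reads $f(1)\le 1/2$, which is false. You therefore must start the induction at $n=2$ and verify $f(2)\le 2+2\log 2$ directly from the implicit relation $f(2)=2+\log f(2)$ (numerically $f(2)\approx 3.15<3.39$). The paper does exactly this.

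Your route is conceptually parallel to the paper's but technically distinct. Both begin from the same self-similarity $v_k(u)=\beta(k)/(1-u)$ (the paper's Lemma~1), but the paper keeps the threshold $t$ explicit and works with the recursion $\beta(n+1)=G(\beta(n))$ where $G(x)=\min_{0<t<1}\{t^{-1}+xt^{-1}\log(1/(1-t))\}$, proving separately that $G(x^2/2)\ge (x+1)^2/2$ (via the first-order condition for $t$) and $G(x^2/2+x\log x)\le (x+1)^2/2+(x+1)\log(x+1)$ (by evaluating at the test point $t=2/(x+2)$). You instead eliminate $t$ at the outset to obtain the implicit relation $\phi(\delta/F)=1/F$ with $\phi(y)=y-\log(1+y)$, and then handle both bounds uniformly via the two-sided rational estimate $y^2/(2(1+y))\le\phi(y)\le y^2/(2+y)$, reducing each direction to a quadratic in $y$. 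The payoff of your approach is its symmetry: one pair of elementary inequalities does both jobs. The paper's approach, by keeping $t$ visible, feeds directly into the later asymptotics for the optimal thresholds $t_n=H(\beta(n-1))$ needed in the variance analysis. For convexity, the paper sidesteps your implicit-function computation by observing that $\Delta(n)=\min_t\widehat g(\beta(n),t)$ with $\widehat g(x,t)$ monotone in $x$ for each fixed $t$, which is arguably slicker than differentiating $\delta^*(F)$.
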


We can also add some precision to this result at no additional cost. We just need to
establish that there is always an optimal policy within
the subclass of \emph{threshold policies}
$\pi=(\tau_1, \tau_2,  \ldots, \tau_n) \in \Pi(n)$ that are determined by
real valued sequences $\{t_i \in [0,1]: 1 \leq i \leq n\}$ and the corresponding recursions
\begin{equation}\label{eq:defThresholdRecursion}
\tau_{k+1} = \min\{ i > \tau_{k}\,:  X_i \in [ X_{\tau_{k}}  , \, X_{\tau_{k}} + t_{n-k} (1-X_{\tau_{k}}) ] \, \}, \quad 0 \leq k < n.
\end{equation}
Here, the recursion begins with $\tau_{0}=0$ and $X_0=0$, and one can think of $t_{n-k}$ as the ``threshold parameter'' that
specifies the maximum fraction that one would be willing to spend from a ``residual budget'' $(1-X_{\tau_{k}})$  in order to accept a value
that arrives after the time $\tau_{k}$ when the $k$'th selection was made.

\begin{Theorem}\label{thm:Threshold Policy}
There is a unique threshold policy
$\pi^* =(\tau^*_1 , \tau^*_2, \ldots, \tau^*_n) \in \Pi(n)$ for which one has
\begin{equation}\label{eq:betalimit2}
\beta(n) = \min_{\pi\in \Pi(n)} \E[ \tau_n]=\E[ \tau^*_n],
\end{equation}
and for this optimal policy $\pi^*$ one has for all $\alpha>2$ that
\begin{equation}\label{eq:betavariance}
\Var [ \tau^*_n]=\frac{1}{3} n^3 + O(n^2 \log^\alpha n) \quad \text{as } n \rightarrow \infty.
\end{equation}
\end{Theorem}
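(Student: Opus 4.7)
The plan is to first establish the threshold-policy structure through dynamic programming, then set up a tractable recursion for the variance and extract its asymptotics. Let $v_m(x)$ denote the minimum expected number of observations needed to complete $m$ further selections, starting from the state in which the last-selected value is $x$. Exploiting the memoryless and scaling properties of the uniform distribution on $[x,1]$, one checks that $v_m(x) = g_m/(1-x)$, where $g_m := \beta(m)$ satisfies the Bellman-type recursion
\[
g_m = \min_{t \in (0,1)} h_m(t), \qquad h_m(t) := \frac{1 - g_{m-1}\log(1-t)}{t},
\]
with $g_0 = 0$. This already implies that the optimal decision depends only on $m$ and on the normalized position $(X_i - X_{\tau_k})/(1 - X_{\tau_k})$, so some optimal policy has the threshold form \eqref{eq:defThresholdRecursion}. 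For the uniqueness of the threshold I would study the auxiliary function $\phi_m(t) := g_{m-1}[t/(1-t) + \log(1-t)] - 1 = t^2 h_m'(t)$. A brief computation gives $\phi_m'(t) = g_{m-1}\,t/(1-t)^2 > 0$, so $\phi_m$ is strictly increasing on $(0,1)$; since $\phi_m(0^+) = -1 < 0$ and $\phi_m(1^-) = +\infty$, the equation $\phi_m(t) = 0$ has a unique root $t_m^* \in (0,1)$. This yields the unique optimal threshold policy $\pi^*$, together with two identities that will drive the variance analysis: $g_m = g_{m-1}/(1-t_m^*)$ and $g_{m-1}\log(1-t_m^*) = 1 - g_m t_m^*$.

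Next I would set up the variance recursion. Let $W_n(x)$ denote the variance of the remaining completion time starting from state $x$. Decomposing $\tau_n^* = T_1 + R$ into the first waiting time and the residual time, and using that $T_1$ is independent of $V_1 := X_{\tau_1^*}$ and hence of $R$ (the strong Markov property makes $R$ depend on the future only through $V_1$), the standard variance decomposition gives
\[
W_n(x) = \Var[T_1] + \E\bigl[W_{n-1}(V_1)\bigr] + \Var\bigl[g_{n-1}/(1-V_1)\bigr],
\]
where $V_1$ is uniform on $[x, x + t_n^*(1-x)]$. The ansatz $W_n(x) = A_n/(1-x)^2 + B_n/(1-x)$ is self-consistent and yields a coupled pair of scalar recursions. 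A welcome surprise is the identity $B_n = -g_n$ for every $n \geq 1$: it follows by induction from $B_1 = -1$ together with the identity $-g_{n-1}\log(1-t_n^*) = g_n t_n^* - 1$. Using $g_n/g_{n-1} = 1/(1-t_n^*)$, the recursion for $A_n$ collapses to
\[
A_n = A_{n-1}\,\frac{g_n}{g_{n-1}} + g_n\bigl(2/t_n^* - g_n t_n^*\bigr),
\]
which on dividing by $g_n$ telescopes to $A_n/g_n = 1 + \sum_{m=2}^n (2/t_m^* - g_m t_m^*)$. Combining with $B_n = -g_n$ yields the clean identity
\[
\Var[\tau_n^*] = W_n(0) = g_n \sum_{m=2}^n \bigl(2/t_m^* - g_m t_m^*\bigr).
\]

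For the final asymptotics, I would apply the expansion $t/(1-t) + \log(1-t) = t^2/2 + (2/3)t^3 + O(t^4)$ to the first-order condition $\phi_m(t_m^*)=0$, which produces $g_{m-1}\, t_m^{*2} = 2 - (8/3)t_m^* + O(t_m^{*2})$; a short manipulation then yields $g_m t_m^* = 2/t_m^* - 2/3 + O(t_m^*)$, so the summand has the key expansion $2/t_m^* - g_m t_m^* = 2/3 + O(1/m)$. Summing gives $\sum_{m=2}^n (2/t_m^* - g_m t_m^*) = 2n/3 + O(\log n)$, which combined with the bound $g_n = n^2/2 + O(n\log n)$ from Theorem~\ref{thm:BetaAsym} produces
\[
\Var[\tau_n^*] = \bigl(\tfrac{n^2}{2}+O(n\log n)\bigr)\bigl(\tfrac{2n}{3}+O(\log n)\bigr) = \tfrac{n^3}{3} + O(n^2 \log n),
\]
which is well within the stated $O(n^2\log^\alpha n)$ for any $\alpha > 2$. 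The principal obstacle is the careful bookkeeping of error terms through these three nested approximations (in $t_m^*$, in the sum, and in $g_n$), with the implied constants uniform over $2 \leq m \leq n$; the slack $\alpha > 2$ in the theorem is generous enough to absorb any less-than-optimal estimates made along the way.
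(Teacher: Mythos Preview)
Your argument is correct, and the variance analysis takes a genuinely different route from the paper. The paper works with the single sequence $v(n)=\Var(\tau_n^*)$ and the distributional identity $\tau_n^*\stackrel{d}{=}\gamma(t_n)+\tau_{n-1}^*/(1-U(t_n))$, obtaining only an \emph{approximate} recursion $v(n)=\{1+2/n+O(n^{-2}\log n)\}v(n-1)+n^2/3+O(n\log n)$, which it then unwinds with product--sum bookkeeping; this is where the slack $\alpha>2$ enters. Your ansatz $W_n(x)=A_n/(1-x)^2+B_n/(1-x)$ is a sharper idea: the identities $g_m=g_{m-1}/(1-t_m^*)$ and $g_{m-1}\log(1-t_m^*)=1-g_mt_m^*$ collapse the two scalar recursions to $B_n=-g_n$ and the telescoped closed form $\Var[\tau_n^*]=g_n\sum_{m=2}^n(2/t_m^*-g_mt_m^*)$. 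This is exact, and once you expand the first-order condition to second order you recover $n^3/3$ with the tighter remainder $O(n^2\log n)$, strictly better than the paper's stated bound. I checked your algebra at every step (the reduction of $A_n$, the identity $g_{n-1}-g_n=-g_nt_n^*$, the expansion $2/t_m^*-g_mt_m^*=2/3+O(t_m^*)$) and it is all correct.

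One point to tighten: your uniqueness argument shows that $h_m$ has a unique minimizer $t_m^*$, hence there is a unique optimal \emph{threshold} policy, but it does not by itself rule out a non-threshold policy in $\Pi(n)$ that also attains $\beta(n)$. The paper handles this separately (its Lemma~\ref{lm:Threshold}) via Wald's identity and a stochastic-domination argument: for any first stopping time $\tau_1$ with $\E[\tau_1]=1/t$, one has $\P(X_{\tau_1}<s)\le s/t=\P(X_{\tau_1^*}<s)$ with strict inequality somewhere unless $\tau_1=\tau_1^*$, forcing the first selection to be by threshold. You can either cite a verification-type result from Markov decision theory (any optimal policy must be greedy with respect to the unique value function, and the greedy action is unique here), or reproduce the short Wald/domination step; either closes the gap.
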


In the next section, we prove the existence and uniqueness of an optimal threshold policy,
and in Section \ref{sec:mean} we complete the proof of Theorem \ref{thm:BetaAsym}
after deriving some recursions that permit the exact computation of the optimal threshold values.
Section \ref{sec:variance} deals with the asymptotics of the variance
and completes the proof of Theorem \ref{thm:Threshold Policy}.

In Sections \ref{se:BlockInquality} and \ref{sec:DualProblems},
we develop the relationship between
Theorems \ref{thm:BetaAsym} and \ref{thm:Threshold Policy} and the more traditional
\emph{size-focused} online selection problem which was
first studied in \citet{SamSte:AP1981} and then studied much more extensively by \citet{BruRob:AAP1991}, \citet{Gne:JAP1999},
\citet{BruDel:SPA2001,BruDel:SPA2004}, and \citet{ArlNguSte:SPA2015}.
On an intuitive level, the time-focused selection problem and the size-focused selection problems are dual to each other,
and it is curious to consider the extent to which rigorous relationships that can be developed between the two.
Finally, in Section \ref{sec:Conclusion} we underscore some open issues, and, in particular,
we note that there are several other selection problems where it may be beneficial to explore the possibility of
time versus size duality.

\section{Threshold Policies: Existence and Optimality}\label{sec:thresholdpolicies}

A beneficial feature of the time-focused monotone selection problem is that there
is a natural similarity between the problems
of size $n$ and size $n-1$.
This similarity entails a ``scaled regeneration" and leads to a useful recursion for $\beta(n)$.

\begin{Lemma}[Variational Beta Recursion]\label{lm:betaRecursion} For all $n=1,2,\ldots$ we have
\begin{equation}\label{eq:BetaRecursion}
\beta(n) = \inf_\tau \E \Big[\tau +  \frac{1}{1-X_\tau} \beta(n-1)\Big],
\end{equation}
where the minimum is over all stopping times $\tau$ and where we initialize the recursion
by setting $\beta(0)=0$.
\end{Lemma}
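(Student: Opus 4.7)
The plan is to fix an admissible policy $\pi = (\tau_1, \ldots, \tau_n) \in \Pi(n)$, condition on the first selection time $\tau_1$ and its value $X_{\tau_1}$, and then use a scaling-and-thinning argument to identify the conditional expected time needed to complete the remaining $n-1$ selections with $\beta(n-1)/(1-X_{\tau_1})$. This ``scaled regeneration'' is the key geometric feature of the problem: once the first value $x = X_{\tau_1}$ has been picked, only future observations $X_i$ with $X_i \in (x, 1]$ are relevant, and those observations, after the affine rescaling $y \mapsto (y-x)/(1-x)$, form a fresh iid uniform $[0,1]$ sequence.

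More concretely, given $(\tau_1, X_{\tau_1}) = (t, x)$, the independence of $X_{t+1}, X_{t+2}, \ldots$ from $\FF_t$ together with the uniform law lets me classify each subsequent observation as ``valid'' ($X_i > x$) or not, independently with probability $1-x$. Writing $S_1 < S_2 < \cdots$ for the valid times after $t$ and setting $Y_k = (X_{S_k} - x)/(1-x)$, a standard thinning argument shows that $\{Y_k\}_{k \ge 1}$ is iid uniform on $[0,1]$ and that the inter-arrival gaps $S_1 - t, S_2 - S_1, \ldots$ are iid geometric with mean $1/(1-x)$ and independent of $\{Y_k\}$. Any continuation of $\pi$ past $\tau_1$ must choose its remaining $n-1$ values from the $X_{S_k}$'s and therefore induces a (possibly $x$-dependent) admissible selection rule for the $(n-1)$-problem acting on $\{Y_k\}$. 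If $K$ denotes the valid-index of the $(n-1)$-st additional selection, then $\tau_n - \tau_1 = \sum_{j=1}^{K}(S_j - S_{j-1})$ with $S_0 := t$, and Wald's identity applied conditionally on $(\tau_1, X_{\tau_1})$ gives
\begin{equation*}
\E[\tau_n - \tau_1 \mid \tau_1, X_{\tau_1}] \;=\; \frac{\E[K \mid \tau_1, X_{\tau_1}]}{1 - X_{\tau_1}} \;\ge\; \frac{\beta(n-1)}{1 - X_{\tau_1}}.
\end{equation*}
Adding $\tau_1$, taking expectations, and infimizing over $\pi \in \Pi(n)$ yields the ``$\ge$'' direction of \eqref{eq:BetaRecursion}.

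For the reverse inequality, given any stopping time $\tau$ with $\E[\tau + \beta(n-1)/(1-X_\tau)] < \infty$, I would construct an admissible policy by setting $\tau_1 = \tau$ and then, on the scaled-and-thinned sequence $\{Y_k\}$ built from the observations after $\tau$, running an $\eps$-optimal policy for the $(n-1)$-selection problem. The same Wald-type identity used in the opposite direction produces $\E[\tau_n] \le \E[\tau + (\beta(n-1)+\eps)/(1-X_\tau)]$, and letting $\eps \downarrow 0$ and then taking the infimum over $\tau$ finishes the argument.

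The main obstacle is the careful handling of the thinning/scaling step and the justification of Wald's identity with a data-dependent stopping rule that sits on top of two independent families: the geometric inter-arrival times and the scaled values $\{Y_k\}$. The independence of these two families, although standard, is precisely what converts the conditional expectation of $\sum_{j=1}^K(S_j - S_{j-1})$ into $\E[K \mid \tau_1, X_{\tau_1}]/(1-X_{\tau_1})$, and one must also record that $\beta(n-1) < \infty$ (by induction from $\beta(1) = 1$) so that the relevant expectations are finite and the interchange is legal.
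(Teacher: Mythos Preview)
Your argument is correct and follows essentially the same scaled-regeneration idea as the paper: condition on the first selection, thin to the observations above $X_{\tau_1}$, note that the gaps are geometric with mean $1/(1-X_{\tau_1})$ while the rescaled surviving values are again i.i.d.\ uniform, and bound the conditional cost of the residual $(n-1)$-problem by $\beta(n-1)/(1-X_{\tau_1})$. The paper presents this as a brief heuristic and then records the two-sided inequality $\beta(n)\le \E[\tau_1+\beta(n-1)/(1-X_{\tau_1})]\le \E[\tau_n]$ for an arbitrary policy; your version makes the same two inequalities explicit via Wald's identity on the geometric gaps and an $\varepsilon$-optimal continuation, which is a welcome sharpening of the exposition rather than a different route.
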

\begin{proof}

Suppose that the first value $X_\tau$ has been selected. When we condition on the value of $X_\tau$, we then
face a problem that amounts to the selection of $n-1$ values
from a \emph{thinned} sequence of uniformly distributed random variables on $[X_\tau, 1]$.
The thinned sequence may be obtained from the original sequence of uniform random variables on $[0,1]$
by eliminating those elements that fall in $[0,X_\tau]$.
The remaining thinned
observations are then separated by geometric time gaps that have mean $1/(1-X_\tau)$, so conditional on $X_\tau$, the
optimal expected time needed for the
remaining selections  is $\beta(n-1)/(1-X_\tau)$. Optimization over $\tau$ then yields \eqref{eq:BetaRecursion}.
\end{proof}

This quick proof of Lemma \ref{lm:betaRecursion} is essentially complete, but there is a variation on this argument that adds
further information and perspective.
If we first note that $\beta(1)=1$, then one can confirm \eqref{eq:BetaRecursion} for $n=1$
simply by taking
$\tau=1$. One can then argue by induction. In particular, we take $n\geq 2$ and consider a \emph{arbitrary} selection policy
$\pi=(\tau_1, \tau_2, \ldots, \tau_n)$.

If we set $\pi'= (\tau_2-\tau_1, \tau_3-\tau_1, \ldots, \tau_n -\tau_{1})$,
then one can view $\pi'$ as a selection policy for the sequence $(X_1',X_2', \ldots) =(X_{1+\tau_1}, X_{2+\tau_1}, \ldots)$
where one can only make
selections from those values that fall in the
interval $[X_{\tau_1}, 1]$.
As before, remaining thinned
observations are then separated by geometric time gaps that have mean $1/(1-X_\tau)$. Thus,
conditional on $X_\tau$, the \emph{optimal} expected time needed for the
remaining selections  is $\beta(n-1)/(1-X_\tau)$.
By the putative suboptimality of the selection times $\tau_1, \tau_2, \ldots, \tau_n$ one has
$$
\frac{\beta(n-1)}{ (1-X_{\tau_1})}
\leq \E[\tau_n-\tau_1 \,
| \, \tau_1, X_{\tau_1}]= -\tau_1+ \E[\tau_n\,| \, \tau_1, X_{\tau_1}].
$$
Moreover, we note that the sum
$$
\tau_1+ \frac{\beta(n-1)}{ (1-X_{\tau_1})}
$$
is the conditional expectation given $\tau_1$ of a strategy that first selects $X_{\tau_1}$ and then proceeds optimally with the selection
of $n-1$ further values.
The suboptimality of this strategy and the further suboptimality of the policy $\pi=(\tau_1, \tau_2, \ldots, \tau_n)$ give us
\begin{equation}\label{eq:betabound}
\beta(n) \leq \E[\tau_1+ \frac{\beta(n-1)}{ (1-X_{\tau_1})}]
\leq \E[\tau_n].
\end{equation}
The bounds of \eqref{eq:betabound} may now look obvious, but since they are valid for \emph{any} strategy one gains
at least a bit of new information. At a minimum,
one gets \eqref{eq:BetaRecursion} immediately
just by taking the infimum in \eqref{eq:betabound} over all $\pi$
in $\Pi(n)$.

Thus, in a sense, the bound \eqref{eq:betabound} generalizes the recursion \eqref{eq:BetaRecursion}
which has several uses. In particular, \eqref{eq:BetaRecursion}
helps one to show that there is a unique threshold policy that
achieves the minimal expectation $\beta(n)$.

\begin{Lemma}[Existence and Uniqueness of an Optimal Threshold Policy]\label{lm:Threshold}
For $ 1 \leq i \leq n$, there are constants
$0\leq t_i \leq 1$ such that the threshold policy
$\pi^* \in \Pi(n)$  defined by \eqref{eq:defThresholdRecursion} is the unique optimal policy.
That is, for $\pi^*=(\tau^*_1, \tau^*_2, \ldots, \tau^*_n)$ one has
\begin{equation*}\label{eq:OptimalThresholdStrategy}
\beta(n) = \min_{\pi \in \Pi(n)} \E[\tau_n]= \E[\tau^*_n],
\end{equation*}
and $\pi^{*}$ is the {\em only} policy in $\Pi(n)$ that achieves this minimum.
\end{Lemma}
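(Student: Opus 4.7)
The plan is to argue by induction on $n$, invoking the Variational Beta Recursion of Lemma \ref{lm:betaRecursion} to reduce the problem at size $n$ to a one-dimensional optimal stopping problem for the first selection, and then to use the scaled-regeneration argument from the proof of that lemma to pass from the residual problem to the size $(n-1)$ problem. The base case $n=1$ is immediate: $\tau^{*}_1=1$ attains $\beta(1)=1$, corresponding to the degenerate threshold $t_1=1$, and this is visibly the unique element of $\Pi(1)$ with expectation $1$.

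For the inductive step, fix $n\geq 2$ and suppose the claim has been established for all smaller sizes. By Lemma \ref{lm:betaRecursion}, the problem reduces to choosing a stopping time $\tau$ that minimizes
\[
\E\Big[\tau+\frac{\beta(n-1)}{1-X_\tau}\Big].
\]
Since $(X_i)$ is i.i.d., the post-$i$ future is distributionally a fresh copy of the original problem, so the conditional continuation value at any time $i$ (given that one has not yet stopped) equals $i+\beta(n)$. Dynamic programming then compares the stop-now payoff $i+\beta(n-1)/(1-X_i)$ to the continuation value $i+\beta(n)$ and identifies the optimal rule as a pure threshold,
\[
\tau^{*}_1=\min\{i\geq 1:X_i\leq t_n\},\qquad t_n:=1-\frac{\beta(n-1)}{\beta(n)}\in[0,1],
\]
where the membership follows from $0\leq\beta(n-1)\leq\beta(n)$ (monotonicity, since any length-$n$ policy truncates to a length-$(n-1)$ policy). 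Almost-sure uniqueness of $\tau^{*}_1$ comes from $\PP(X_i=t_n)=0$.

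Once $X_{\tau^{*}_1}=x$ is selected, the scaled regeneration underlying Lemma \ref{lm:betaRecursion} shows that the residual problem is distributionally a size $(n-1)$ selection problem on $[x,1]$. The affine rescaling $u\mapsto(u-x)/(1-x)$ reduces this to the size $(n-1)$ problem on $[0,1]$, for which the inductive hypothesis supplies a unique threshold policy with parameters $t_1,\ldots,t_{n-1}$. Translating the acceptance sets back to $[x,1]$ recovers precisely the recursion \eqref{eq:defThresholdRecursion}: at the $(k+1)$-st selection with $X_{\tau_k}=y$, the value $X_i$ is accepted iff $X_i\in[y,\,y+t_{n-k}(1-y)]$. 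Stitching together the step-by-step uniqueness statements yields uniqueness of the full policy $\pi^{*}$.

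The main obstacle I anticipate is making the dynamic-programming step of the induction fully rigorous: one must verify that the infimum in \eqref{eq:BetaRecursion} is actually attained, and attained by a pure (deterministic, threshold) stopping rule rather than some randomized or history-dependent scheme, and that the resulting $\tau^{*}_1$ lies in $\Pi(n)$. The key facts enabling this are the i.i.d.\ structure of $(X_i)$, the finiteness of $\beta(n-1)$ (inherited from the upper bound in Theorem \ref{thm:BetaAsym}), and the standard monotone-case principle of optimal stopping which determines the stop set by a one-step comparison of expected payoffs.
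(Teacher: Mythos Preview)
Your approach and the paper's diverge at the key step of showing that the first selection must be a threshold rule. The paper never invokes a Bellman comparison. Instead, given an arbitrary first stopping time $\tau_1$, it sets $t=1/\E[\tau_1]$ and introduces the threshold competitor $\tau_1^*=\min\{i:X_i<t\}$, so that $\E[\tau_1^*]=\E[\tau_1]$ by construction. A Wald argument applied to $\1(X_{\tau_1}<s)\leq\sum_{i\leq\tau_1}\1(X_i<s)$ gives $\P(X_{\tau_1}<s)\leq s/t=\P(X_{\tau_1^*}<s)$, and this stochastic domination yields $\E[\beta(n-1)/(1-X_{\tau_1^*})]\leq\E[\beta(n-1)/(1-X_{\tau_1})]$, with strict inequality unless $\tau_1=\tau_1^*$ a.s. The argument is entirely self-contained: it uses only $\beta(n-1)$, not $\beta(n)$, needs no appeal to general optimal-stopping theory, and delivers uniqueness directly. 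The numerical value of the threshold is identified only afterwards, in Lemma~\ref{lem:ind}, by a one-parameter minimization.

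Your dynamic-programming route can be made to work and has the pleasant feature of producing the closed form $t_n=1-\beta(n-1)/\beta(n)$ at once (and this does coincide with the paper's $t_n=H(\beta(n-1))$, as one checks from the first-order condition for $g(\beta(n-1),\cdot)$). But, as you yourself flag, the comparison ``stop-now versus $i+\beta(n)$'' is not yet a proof: you are using $\beta(n)$ as the continuation value in order to identify the rule that realizes $\beta(n)$, so you still owe either a direct verification that the threshold rule with this $t_n$ beats every stopping time (e.g.\ via a Wald inequality of the form $(V-f(X_\tau))^+\leq\sum_{i\leq\tau}(V-f(X_i))^+$ with $f(x)=\beta(n-1)/(1-x)$ and $V=\beta(n)$), or a precise citation of the monotone-case theorem with its hypotheses checked. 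Two smaller points: your appeal to Theorem~\ref{thm:BetaAsym} for finiteness of $\beta(n-1)$ is logically downstream of this lemma---use instead any crude suboptimal policy (say, accept the first value in each interval $[j/n,(j+1)/n)$ in turn) to get $\beta(m)\leq m^2<\infty$; and once the DP principle is in place, your uniqueness line via $\P(X_i=t_n)=0$ is indeed sufficient.
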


\begin{proof}
The proof again proceeds by induction. The case $n=1$ is trivial since the only optimal policy is to take any element which presented.
This corresponds to the threshold policy with $t_1=1$ and $\beta(1) = 1$.

For the moment, we consider an arbitrary policy
$\pi=(\tau_1, \tau_2, \ldots, \tau_n) \in \Pi(n)$.
We have $1 \leq \E [\tau_1] < \infty$, and we introduce a parameter $t$ by setting $t=(\E[ \tau_1])^{-1}$.
Next, we define a new, threshold stopping time $\tau_1^*$ by setting
$$
\tau_1^* = \min\{i : X_i < t\},
$$
and we note that this construction gives us  $\E[\tau_1^*] = \E[\tau_1] =1/t$. For $s \in [0,t]$, we also have the trivial inequality
$$
\1(X_{\tau_1} <s) \leq \sum_{i=1}^{\tau_1} \1(X_i < s),
$$
so by Wald's equation we also have
\begin{equation}\label{eq:ByWald}
\P(X_{\tau_1} <s) \leq  \E[\sum_{i=1}^{\tau_1} \1(X_i < s)] = s \E[\tau_1] = s/t.
\end{equation}
The definition of $\tau^*_1$ implies that $X_{\tau^*_1}$ is uniformly distributed on $[0,t]$, so we
further have $\P(X_{\tau^*_1} <s)= \min\{ 1, s/t \}$.
Comparison with \eqref{eq:ByWald} then gives us the domination relation
\begin{equation}\label{eq:domination}
\P(X_{\tau_1} <s) \leq \P(X_{\tau^*_1} <s) \quad \text{for all }0 \leq s \leq 1.
\end{equation}
From \eqref{eq:domination} and the monotonicity of $x \mapsto (1-x)^{-1}$, we have by integration that
\begin{equation} \label{eq:domination2}
\E[\frac{\beta(n-1)}{1-X_{\tau_1^*}} )] \leq \E[\frac{\beta(n-1)}{1-X_{\tau_1}}],
\end{equation}
and one has a strict inequality in \eqref{eq:domination} and \eqref{eq:domination2} unless $\tau_1^* = \tau_1$
with probability one.

If we now add $\E[\tau_1^*] = \E[\tau_1]$ to the corresponding sides of
\eqref{eq:domination2} and take the infimum over all $\tau_1$, then the beta recursion \eqref{eq:BetaRecursion} gives us
$$
\E[\tau_1^*] +\E\Big[\frac{\beta(n-1)}{1-X_{\tau_1^*}} )\Big] \leq \inf_{\tau_1} \Big\{\E[\tau_1] +\E[\frac{\beta(n-1)}{1-X_{\tau_1}}] \Big\}
=\beta(n).
$$
In other words, the first selection of an optimal policy is given by uniquely by a threshold rule.

To see that all subsequent selections must be made by threshold rules, we just need to note that given the time
$\tau_1$ and value $X_{\tau_1}=x$ of the first selection, one is left with a selection problem of size $n-1$ from the
smaller set
$
\{X_i: i > \tau_1 \, \text{and} \, X_i>x \}.
$
The induction hypothesis applies to this problem of size $n-1$,
so we conclude that there is a unique threshold policy
$(\tau^*_2, \tau^*_3, \ldots, \tau^*_n)$ that is optimal for these selections. Taken as a whole, we have a unique threshold
policy $(\tau^*_1, \tau^*_2, \ldots, \tau^*_n) \in \Pi(n)$ for the problem of selecting an increasing subsequence of size $n$
in minimal time.
\end{proof}

Lemma \ref{lm:Threshold} completes the proof of the first assertion \eqref{eq:betalimit2} of Theorem \ref{thm:Threshold Policy}.
After we develop a little more information on the behavior of the mean, we will
return to the proof of the second assertion \eqref{eq:betavariance} of Theorem \ref{thm:Threshold Policy}.

\section{Lower and Upper Bounds for the Mean}\label{sec:mean}

The recursion \eqref{eq:BetaRecursion} for $\beta(n)$ is informative, but to determine its asymptotic behavior we need
more concrete and more structured recursions. The key relations are summarized in the next lemma.

\begin{Lemma}[Recursions for $\beta(n)$ and the Optimal Thresholds]\label{lem:ind}
For each $x \geq 1$ and $t \in (0,1)$ we let
\begin{equation}\label{eq:gGdef}
g(x,t) =  \frac{1}{t} + \frac{x}{t} \log\Big(\frac{1}{1-t}\Big), \quad
G(x) = \min_{0 < t < 1} g(x,t), \quad \text{and }
\end{equation}
\begin{equation}\label{eq:Hdef}
H(x) = \argmin_{0 < t < 1} g(x,t).
\end{equation}
We then have $\beta(1) = 1$, and we have the recursion
\begin{equation} \label{eq:ind}
\beta(n+1) = G(\beta(n))  \quad \text{for all } n \geq 1.
\end{equation}
Moreover, if the deterministic sequence $t_1, t_2, \ldots$ is defined by the recursion
\begin{equation}\label{tnDef}
t_1 = 1 \quad \text{and} \quad t_{n+1} = H(\beta(n)) \quad \text{for all } n\geq 1,
\end{equation}
then the minimum in the defining equation \eqref{def:beta} for $\beta(n)$
is uniquely achieved by the sequence of stopping times
given by the threshold recursion \eqref{eq:defThresholdRecursion}.
\end{Lemma}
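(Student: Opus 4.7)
The plan is to specialize the variational recursion \eqref{eq:BetaRecursion} from Lemma \ref{lm:betaRecursion} to threshold stopping times for the first selection, which is justified by Lemma \ref{lm:Threshold}, and then to evaluate the resulting expectation in closed form so as to recognize $g(\beta(n),t)$.

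First I would invoke Lemma \ref{lm:Threshold} to conclude that the infimum in \eqref{eq:BetaRecursion} for $\beta(n+1)$ is attained by a first-selection rule of the form $\tau(t) = \min\{i \geq 1 : X_i < t\}$ for some $t \in (0,1)$, so that
$$
\beta(n+1) \;=\; \inf_{t \in (0,1)} \E\Big[\tau(t) + \frac{\beta(n)}{1-X_{\tau(t)}}\Big].
$$
Next I would compute this expectation explicitly. The random time $\tau(t)$ is geometric with parameter $t$, giving $\E[\tau(t)] = 1/t$, and the marginal distribution of $X_{\tau(t)}$ is uniform on $[0,t]$, so
$$
\E\Big[\frac{1}{1-X_{\tau(t)}}\Big] \;=\; \frac{1}{t}\int_0^t \frac{du}{1-u} \;=\; \frac{1}{t}\log\Big(\frac{1}{1-t}\Big).
$$
Adding these gives exactly $g(\beta(n),t)$, from which the recursion \eqref{eq:ind} follows at once.

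To complete the lemma it remains to verify that $g(\beta(n),\cdot)$ has a unique minimizer in $(0,1)$ so that $G$ and $H$ are well-defined at $\beta(n)$. Existence of an interior minimizer is clear because $g(\beta(n),t) \to \infty$ as $t \to 0^+$ (through the $1/t$ term) and as $t \to 1^-$ (through the $\log(1/(1-t))$ term). Uniqueness is handed to us for free by Lemma \ref{lm:Threshold}: if two distinct $t \neq t'$ both attained the minimum, the two corresponding threshold policies — identical in all selections after the first — would each be optimal, contradicting the uniqueness assertion of Lemma \ref{lm:Threshold}. Hence the minimizer $H(\beta(n))$ is well-defined.

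Finally, setting $t_{n+1} = H(\beta(n))$ gives the optimal first threshold for the $(n+1)$-length problem. The scaled regeneration from the proof of Lemma \ref{lm:betaRecursion} reduces, conditional on the first selection, the remaining task to an $n$-length selection problem on $[0,1]$; induction on $n$ then identifies the entire threshold sequence $t_{n+1}, t_n, \ldots, t_1$ as required by \eqref{eq:defThresholdRecursion}. The only substantive step is the uniqueness of the minimizer, and this is bought cheaply from the preceding lemma rather than from any direct analysis of $g$.
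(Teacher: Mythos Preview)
Your argument is correct and follows essentially the same route as the paper: invoke Lemma~\ref{lm:Threshold} to restrict the first selection to a threshold rule $\tau(t)=\min\{i:X_i<t\}$, compute $\E[\tau(t)]=1/t$ and $\E[(1-X_{\tau(t)})^{-1}]=t^{-1}\log(1/(1-t))$, and recognize $g(\beta(n),t)$. The one place you go beyond the paper is in justifying that $H$ is well-defined: the paper says nothing here and only establishes uniqueness of the minimizer later, in Lemma~\ref{lem:G}, by direct calculus on $g$; your shortcut of reading uniqueness off from the uniqueness clause of Lemma~\ref{lm:Threshold} is a tidy alternative.
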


\begin{proof}
An optimal  first selection time has the form $\tau_1 = \min\{i : X_i < t\}$, so we can rewrite the
recursion \eqref{eq:BetaRecursion} as
\begin{align}
\beta(n) &= \min_{0 < t < 1}\Big\{ \frac{1}{t} +  \E[\frac{\beta(n-1)}{1-X_{\tau_1}}] \Big\} =
\min_{0 < t < 1} \Big\{ \frac{1}{t}+ \frac{\beta(n-1)}{t} \int_{0}^t \frac{1}{1-s}\, ds \Big\} \notag\\
&=
\min_{0 < t < 1} g(t,\beta(n-1)) \equiv G(\beta(n-1)). \label{eq:gH}
\end{align}
The selection rule for the first element is given by
$
\tau_1= \min\{ i: X_i < t_n \}
$
so by \eqref{eq:gH} and the definitions of $g$ and $H$ we have $t_n = H(\beta(n-1))$.
\end{proof}

Lemma \ref{lem:ind} already gives us enough to prove
the first assertion of Theorem \ref{thm:BetaAsym}
which states that the map $n \mapsto \beta(n)$ is convex.

\begin{Lemma}\label{lm:BetaConvex}
The map $n\mapsto \Delta(n):= \beta(n+1) - \beta(n)$ is an increasing function.
\end{Lemma}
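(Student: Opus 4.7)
The plan is to exploit Lemma \ref{lem:ind}, which gives the recursion $\beta(n+1) = G(\beta(n))$, and so writes the increment as $\Delta(n) = h(\beta(n))$ with $h(x) := G(x) - x$. Since the composition of non-decreasing maps is non-decreasing, the result will follow from two auxiliary claims: (a) the sequence $n \mapsto \beta(n)$ is non-decreasing, and (b) the function $x \mapsto h(x)$ is non-decreasing on the range of $\beta$.

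For (a) I will use the elementary Taylor inequality
\[
\log\!\left(\frac{1}{1-t}\right) = t + \frac{t^2}{2} + \frac{t^3}{3} + \cdots > t \qquad \text{for every } t \in (0,1).
\]
This gives $g(x,t) = 1/t + (x/t)\log(1/(1-t)) > 1/t + x > x$ for all $x \geq 0$, hence $G(x) > x$, and the recursion $\beta(n+1) = G(\beta(n))$ yields $\beta(n+1) > \beta(n)$.

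For (b) the key observation is the rearrangement
\[
g(x,t) - x = \frac{1}{t} + x \left( \frac{\log(1/(1-t))}{t} - 1 \right).
\]
For each fixed $t \in (0,1)$ this is an affine function of $x$ whose slope equals $\log(1/(1-t))/t - 1 = t/2 + t^2/3 + \cdots$, which is strictly positive by the same Taylor expansion. Hence, for every fixed $t$, the map $x \mapsto g(x,t) - x$ is (strictly) increasing in $x$. Because a pointwise minimum of non-decreasing functions is non-decreasing, the identity
\[
h(x) = G(x) - x = \min_{0 < t < 1} \bigl\{ g(x,t) - x \bigr\}
\]
then shows that $h$ is non-decreasing on $[0,\infty)$.

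Combining (a) and (b), $\Delta(n) = h(\beta(n))$ is non-decreasing in $n$, as required. I do not anticipate any real obstacle in turning this sketch into a formal proof: the entire argument rests on the algebraic rearrangement of $g(x,t) - x$ above, which exhibits $h$ as a minimum of affine strictly increasing functions, together with the elementary bound $-\log(1-t) > t$ on $(0,1)$.
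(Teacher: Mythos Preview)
Your proof is correct and follows essentially the same route as the paper. The paper likewise writes $\Delta(n)=\min_{0<t<1}\widehat g(\beta(n),t)$ with $\widehat g(x,t)=g(x,t)-x=\tfrac{1}{t}+x\big[\tfrac{1}{t}\log\tfrac{1}{1-t}-1\big]$, observes via the same Taylor expansion that the bracketed coefficient is positive so that $x\mapsto\widehat g(x,t)$ is increasing for each fixed $t$, and then combines this with $\beta(n)\le\beta(n+1)$ before minimizing over $t$; the only cosmetic difference is that you package the argument as ``$h=G-\mathrm{id}$ is non-decreasing,'' and you supply an explicit analytic check of $\beta(n+1)>\beta(n)$ where the paper simply invokes monotonicity.
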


\begin{proof}
One can give a variational characterization of $\Delta$ that makes this evident.
First, by the defining relations  \eqref{eq:gGdef} and the recursion \eqref{eq:ind} we have
\begin{align*}
\beta(n+1) - \beta(n) & = G(\beta(n)) - \beta(n) \\
                      & = \min_{0< t < 1} \Big\{ \frac{1}{t} + \beta(n) \Big[ \frac{1}{t} \log \Big(\frac{1}{1-t}\Big) - 1 \Big] \Big\},
\end{align*}
so if we set
$$
\widehat g (x,t) = \frac{1}{t} + x \Big[ \frac{1}{t} \log \Big(\frac{1}{1-t}\Big) - 1 \Big],
$$
then we have
\begin{equation}\label{eq:BetaDifferenceRecursion}
\Delta(n)=\beta(n+1) - \beta(n) = \min_{0 < t < 1} \widehat g( \beta(n), t).
\end{equation}
Now, for $0\leq x \leq y$ and $t \in (0,1)$ we then have
$$
\widehat g(x, t) - \widehat g(y, t) = (x-y)\Big[ \frac{1}{t} \log \Big(\frac{1}{1-t} \Big) -1 \Big]
=(x-y)\sum_{k=2}^\infty \frac{1}{k} t^{k-1}\leq 0;
$$
so from the monotonicity $\beta(n) \leq \beta(n+1)$, we get
$$
\widehat g(\beta(n), t) \leq \widehat g(\beta(n+1),t) \quad \quad \text{for all } 0 < t < 1.
$$
When we minimize over $t\in (0,1)$, we see that \eqref{eq:BetaDifferenceRecursion} gives us $\Delta(n) \leq \Delta(n+1)$.
\end{proof}

We next show that the two definitions in \eqref{eq:gGdef} can be used to give an
\emph{a priori} lower bound on $G$. An induction argument using the recursion
\eqref{eq:ind} can then be used to obtain the lower half of \eqref{eq:betalimit}.

\begin{Lemma}[Lower Bounding $G$ Recursion]\label{lm:UpperBound}
For the function $x \mapsto G(x)$ defined by \eqref{eq:gGdef}, we have
\begin{equation}\label{eq:Gbound}
\frac{1}{2} (x+1)^2\leq G\Big( \frac{1}{2} x^2 \Big) \quad \quad \text{for all } x \geq 1.
\end{equation}
\end{Lemma}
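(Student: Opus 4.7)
The plan is to reduce the claimed lower bound on the minimum to a pointwise inequality in $t$, find the worst $t$ in closed form, and then verify a one-variable estimate in $x$ via a Taylor expansion.

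First, I would note that proving $\tfrac12(x+1)^2 \le G(\tfrac12 x^2)$ is equivalent to proving
$$g(\tfrac12 x^2, t) \;=\; \frac{1}{t} + \frac{x^2}{2t}\log\!\frac{1}{1-t} \;\ge\; \frac{(x+1)^2}{2} \qquad \text{for every } t\in(0,1),$$
because $G$ is defined as the minimum over $t$. Multiplying through by the positive quantity $t$, this in turn is equivalent to $\psi(t)\ge 0$, where
$$\psi(t) \;:=\; 1 + \frac{x^2}{2}\log\!\frac{1}{1-t} - \frac{t(x+1)^2}{2}.$$
So the whole argument becomes a one-dimensional minimization of $\psi$ over $t\in(0,1)$, which is much cleaner than analyzing $g$ directly.

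Next, I would differentiate: $\psi'(t)=\tfrac{x^2}{2(1-t)}-\tfrac{(x+1)^2}{2}$, which is strictly increasing in $t$ and has a unique zero at
$$t_0 \;=\; 1 - \frac{x^2}{(x+1)^2} \;=\; \frac{2x+1}{(x+1)^2}\in(0,1).$$
Hence $\psi$ attains its minimum on $(0,1)$ at $t_0$, and the entire inequality reduces to checking that $\psi(t_0)\ge 0$. Substituting $1-t_0 = x^2/(x+1)^2$ and simplifying $t_0(x+1)^2 = 2x+1$ gives the remarkably clean expression
$$\psi(t_0) \;=\; \tfrac12 - x + x^2\log\!\Bigl(1+\tfrac{1}{x}\Bigr).$$

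The remaining task is to show $x^2\log(1+1/x)\ge x-\tfrac12$ for $x\ge 1$, or equivalently
$$\log\!\Bigl(1+\tfrac{1}{x}\Bigr) \;\ge\; \frac{1}{x}-\frac{1}{2x^2}.$$
I would obtain this from the Taylor series $\log(1+1/x)=\sum_{k\ge 1}(-1)^{k+1}/(kx^k)$: the difference between the two sides is the alternating tail $\sum_{k\ge 3}(-1)^{k+1}/(kx^k)$, whose terms are strictly decreasing in absolute value for $x\ge 1$, so by the alternating series estimate the tail is at least $\tfrac{1}{3x^3}-\tfrac{1}{4x^4}=(4x-3)/(12x^4)>0$ on $[1,\infty)$. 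This confirms $\psi(t_0)>0$, completing the proof.

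The only place where anything could really go wrong is the final scalar inequality in $x$, so that is the ``main obstacle''; but once the critical point $t_0$ is found in closed form, the inequality collapses to an easy Taylor estimate on $[1,\infty)$. I would present the reduction to $\psi$, the computation of $t_0$, and the Taylor argument in that order, keeping the algebra minimal because the closed form $t_0=(2x+1)/(x+1)^2$ produces immediate simplifications at every step.
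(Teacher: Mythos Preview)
Your proof is correct and follows essentially the same route as the paper: both reduce the bound on $G(\tfrac12 x^2)$ to a pointwise inequality in $t$, locate the unique critical point $t_0=(2x+1)/(x+1)^2$, and finish with the scalar estimate $\log(1+1/x)\ge 1/x-1/(2x^2)$. Your function $\psi(t)$ is precisely $-\tfrac12\delta(x,t)$ in the paper's notation, so the two arguments coincide up to sign and a factor of $2$.
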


\begin{proof}

To prove \eqref{eq:Gbound}, we first note that by \eqref{eq:gGdef} it suffices to show that
one has
\begin{equation}\label{eq:upper-bound-difference}
\delta(x,t) =(x+1)^2 t - 2 - x^2 \log\Big(\frac{1}{1-t}\Big)  \leq 0
\end{equation}
for all $x \geq 1$ and $t \in (0,1)$. For $x \geq 1$  the  map $t \mapsto \delta(x,t)$
is twice-continuous differentiable and concave in $t$. Hence
there is a unique value $t^* \in (0,1)$ such that $t^* = {\rm argmax}_{0<t<1} \delta(x,t)$,
and such that $t^*=t^*(x)$ satisfies the first order condition
$$
(x+1)^2 - (1 - t^*)^{-1} x^2  = 0.
$$
Solving this equation gives us
$$
t^* = \frac{2x + 1}{(x + 1)^2},
\quad \text{and} \quad
\delta(x,t^*) = - 1 + 2x - 2x^2 \log\Big( 1 + \frac{1}{x} \Big),
$$
so the familiar bound
$$
\frac{1}{x} - \frac{1}{2x^2} \leq \log\Big( 1 + \frac{1}{x} \Big) \quad \quad \text{for } x\geq 1,
$$
gives us
$$
\delta(x,t) \leq \delta(x,t^*) \leq   - 1 + 2x - 2x^2 \Big( \frac{1}{x} - \frac{1}{2x^2} \Big) = 0,
$$
and this is just what we needed to complete the proof of \eqref{eq:upper-bound-difference}.
\end{proof}

Now, to argue by induction, we consider the hypothesis that one has
\begin{equation}\label{eq:BetaNlowerbnd}
\frac{1}{2} n^2  \leq \beta(n).
\end{equation}
This holds for $n=1$ since $\beta(1)=1$, and, if it holds for some $n\geq 1$, then
by the monotonicity of $G$ we have $G(n^2/2) \leq G(\beta(n))$. Now, by \eqref{eq:Gbound}
and \eqref{eq:ind} we have
$$
\frac{1}{2} (n+1)^2 \leq G\Big(\frac{1}{2}n^2\Big) \leq G(\beta(n))=\beta(n+1),
$$
and this completes our induction step from \eqref{eq:BetaNlowerbnd}.

\subsection*{\sc An Alternative Optimization Argument\protect\footnote{This subsection is based on the kind suggestions of an anonymous referee.}}\subsectionnewline

From the definition of the threshold policy \eqref{eq:defThresholdRecursion}
we also know that for $0 \leq k < n$ the difference $ \tau_{k+1} - \tau_{k}$ of the selection times is a geometrically distributed
random variable with parameter $ \lambda_k = t_{n-k}(1-X_{\tau_k})$, so one has the representation
\begin{equation}\label{eq:beta-n-sum-geo}
\beta(n) = \sum_{k=0}^{n-1} \E\Big[ \frac{1}{\lambda_k} \Big].
\end{equation}
Moreover, we also have
$$
\E[X_{\tau_{k+1}} - X_{\tau_{k}} \, | \, X_{\tau_{k}}] = \frac{1}{2} \lambda_k
$$
so, when we take expectations and sum, we see that telescoping gives us
\begin{equation}\label{eq:LPproto2}
\frac{1}{2} \sum_{k=0}^{n-1} \E[ \lambda_k ]= \E[X_{\tau_n}] \leq 1,
\end{equation}
where the last inequality is a trivial consequence of the fact that  $X_{\tau_{n}} \in [0,1]$.

The relations \eqref{eq:beta-n-sum-geo} and \eqref{eq:LPproto2} now suggest a natural optimization problem.
Specifically, for $n \geq 2$ we let $\{l_k: 0 \leq k \leq n-1\}$ denote a sequence of random variables, and we consider the
following convex program
\begin{align}\label{eq:optimization-problem}
 & \text{minimize} \quad \sum_{k=0}^{n-1} \frac{1}{\E[l_k]} \\
 & \text{subject to} \quad \sum_{k=0}^{n-1} \E[l_k] \leq 2 \notag\\
 & \hphantom{\text{subject to}} \E[l_k] \geq 0,  \quad \quad  k = 0,2, \ldots, n-1.\notag
\end{align}
This is a convex program in the  $n$ real variables $\E[l_k]$,  $k=0,1, \ldots, n-1$, and,
by routine methods, one can confirm that its unique optimal solution is given by taking $\E[l_k] = 2/n$ for all $0 \leq k < n$.
The corresponding optimal value is obviously equal to $n^2/2$.

By the definition of the random variables $\{\lambda_k:  0\leq k < n\}$, we have the bounds $0\leq \lambda_k\leq 1$, and by \eqref{eq:LPproto2}
we see that their expectations satisfy the first constraint of the optimization problem \eqref{eq:optimization-problem},
so $\{\E[\lambda_k]:  0\leq k < n\}$ is a feasible solution of \eqref{eq:optimization-problem}.
Feasibility of the values $\{\E[\lambda_k]: 0 \leq k < n\}$ then implies that
\begin{equation}\label{eq:lambda-ell}
\frac{1}{2} n^2  \leq  \sum_{k=0}^{n-1} \frac{1}{\E\Big[  \lambda_k \Big]},
\end{equation}
and, by convexity of $x \mapsto 1/x$ on $[0,1]$, we obtain from Jensen's inequality that
\begin{equation}\label{eq:beta-n-sum-geo-JENSEN}
\sum_{k=0}^{n-1} \frac{1}{\E\Big[  \lambda_k \Big]}  \leq  \sum_{k=0}^{n-1} \E\Big[ \frac{1}{\lambda_k} \Big]=\beta(n).
\end{equation}
Taken together \eqref{eq:lambda-ell}
and \eqref{eq:beta-n-sum-geo-JENSEN} tell us that $n^2/2\leq \beta(n)$,
so we have a second proof of \eqref{eq:BetaNlowerbnd}, or
the lower bound \eqref{eq:betalimit} of Theorem \ref{thm:BetaAsym}.

Here we should note that \citet{Gne:JAP1999} used a similar convex programming argument to prove a tight
\emph{upper bound} in the
corresponding \emph{size-focused} selection problem.
Thus, we have here our first instance of the kind of duality that is discussed more fully in Section \ref{sec:DualProblems}.

\subsection*{\sc Completion of the Proof of Theorem \ref{thm:BetaAsym}}\subsectionnewline

We now complete the proof of Theorem \ref{thm:BetaAsym} by proving the upper half of \eqref{eq:betalimit}.
The argument again depends on an \emph{a priori} bound on $G$. The proof is brief but delicate.

\begin{Lemma}[Upper Bounding $G$ Recursion]\label{lm:SharpLowerBoundonBeta}
For the function $x \mapsto G(x)$ defined by \eqref{eq:gGdef} one has
$$
G( \frac{1}{2}x^2 + x \log(x) ) \leq \frac{1}{2} (x+1)^2 + (x + 1) \log(x + 1) \quad \quad \text{for all } x \geq 1.
$$
\end{Lemma}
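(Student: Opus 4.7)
The plan is to exhibit a single value of $t \in (0,1)$ for which $g(y,t) \leq C$ already holds, and then to verify the resulting scalar inequality in $x$. Write $y = \tfrac{1}{2}x^2 + x\log x$ and $C = \tfrac{1}{2}(x+1)^2 + (x+1)\log(x+1)$, and set $A = C - y = x + \tfrac{1}{2} + (x+1)\log(x+1) - x\log x$; since $A > 1$ for $x \geq 1$, the value $t := 1 - y/C$ lies in $(0,1)$ and is an admissible threshold. This specific choice is motivated by the fact that it is the unique maximizer of the concave function $t \mapsto Ct - 1 - y\log(1/(1-t))$, so it is the best $t$ for the inequality we need.

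For this $t$, direct substitution gives $1/(1-t) = C/y$ and $1/t = C/(C-y)$, hence
\begin{equation*}
g(y,t) \;=\; \frac{C}{C-y}\bigl(1 + y\log(C/y)\bigr),
\end{equation*}
and the desired inequality $g(y,t) \leq C$ simplifies to $y\log(C/y) \leq A - 1$, i.e.\ $e^{(A-1)/y} \geq 1 + A/y$. Using the quadratic lower bound $e^u \geq 1 + u + u^2/2$ for $u \geq 0$ applied to $u = (A-1)/y$, the lemma reduces to the purely algebraic claim
\begin{equation*}
(A-1)^2 \;\geq\; 2y \;=\; x^2 + 2x\log x.
\end{equation*}

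To verify this final inequality, I would use the elementary identity $(x+1)\log(x+1) - x\log x = \log x + (x+1)\log(1 + 1/x)$ together with the integral estimate $(x+1)\log(1 + 1/x) = (x+1)\int_x^{x+1} \frac{du}{u} \geq 1$. This yields the clean lower bound $A - 1 \geq x + \tfrac{1}{2} + \log x$, and squaring (both sides are positive for $x \geq 1$) gives
\begin{equation*}
(A-1)^2 \;\geq\; x^2 + 2x\log x + x + \log^2 x + \log x + \tfrac{1}{4},
\end{equation*}
which exceeds $2y$ since every extra term is non-negative on $x \geq 1$.

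The hardest part is managing the $x\log x$ contribution carefully; the naive estimate $\log(1+v) \leq v$ is too weak for small $x$ (where $v = A/y$ can exceed $1$), which is why the exponential form $e^u \geq 1 + u + u^2/2$ is preferable to working with $\log$ directly. Once that trick is in hand, the remaining algebra is routine.
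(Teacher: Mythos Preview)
Your proof is correct, and it takes a genuinely different route from the paper's. The paper plugs in the explicit test value $t' = 2/(x+2)$ and then controls $g(f(x),t')$ using the rational upper bound $\log(1+y) \leq y(y+2)/(2(y+1))$; after simplification this yields $g(f(x),t') \leq f(x+1) + \tfrac{1}{2} - (x+1)\log(1+1/x)$, and the same estimate $(x+1)\log(1+1/x) \geq 1$ that you use finishes the argument. You instead choose the \emph{optimal} $t$ for the inequality $g(y,t) \leq C$ (namely $t = 1 - y/C$), which reduces the problem cleanly to the algebraic claim $(A-1)^2 \geq 2y$, and then dispatch that via $e^u \geq 1 + u + u^2/2$ and the lower bound $A-1 \geq x + \tfrac{1}{2} + \log x$. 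Your approach is more systematic---the choice of $t$ is principled rather than guessed---and it isolates the core scalar inequality more transparently; the paper's approach is a bit shorter and avoids the exponential detour. Both arguments ultimately lean on $(x+1)\log(1+1/x) \geq 1$, so neither is deeper than the other.
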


\begin{proof}
If we set $f(x) : = x^2 / 2 + x \log (x)$, then we need to show that
$$
G(f(x)) \leq f(x+1).
$$
If we take $t' = 2/(x+2)$ then the defining relation \eqref{eq:gGdef} for $G$ tells
us that
\begin{equation}\label{eq:G-upperbound-1}
G(f(x)) \leq g(f(x),t') = \frac{x+2}{2} + \frac{x+2}{2} \log\Big(1 + \frac{2}{x}\Big) f(x).
\end{equation}
Next, for any $y \geq 0$ integration over $(0,y)$ of the inequality
$$
\frac{1}{u+1} \leq \frac{u^2 + 2u + 2}{2(u+1)^2} \quad \text{ implies the bound } \quad
\log ( 1 + y ) \leq  \frac {y(y+2)}{2 (y+1)}.
$$
If we now set $y = 2/x$ and substitute this last bound in \eqref{eq:G-upperbound-1}, we obtain
\begin{align*}
G(f(x)) & \leq \frac{x}{2} + 1 + \Big( 1 + \frac{1}{x} \Big) f(x) \\
        & = f(x+1) + \frac{1}{2} + (x+1)\{ \log(x) - \log(x+1) \} \leq f(x+1),
\end{align*}
just as needed to complete the proof of the lemma.
\end{proof}

One can now use Lemma \ref{lm:SharpLowerBoundonBeta} and induction to prove that for all
$n \geq 2$ one has
\begin{equation}\label{eq:BetaUpperBound}
\beta(n) \leq \frac{1}{2} n^2 + n \log(n).
\end{equation}
Since $\beta(2) = G(1) = \min_{0<t<1} g(1,t) < 3.15$ and
$2(1+\log(2))\approx 3.39$, one has \eqref{eq:BetaUpperBound} for $n=2$.
Now, for $n\geq 2$, the monotonicity of $G$ gives us that
$$
\beta(n) \leq \frac{1}{2} n^2 + n \log(n) \quad \text{ implies } \quad G(\beta(n)) \leq G\Big(\frac{1}{2} n^2 + n \log(n) \Big).
$$
Finally, by the
recursion \eqref{eq:ind} and Lemma \ref{lm:SharpLowerBoundonBeta} we have
\begin{align*}
\beta(n+1) &=G(\beta(n)) \leq G\Big(\frac{1}{2} n^2 + n \log(n) \Big) \\
&\leq  \frac{1}{2} (n+1)^2 + (n+1) \log(n+1).
\end{align*}
This completes the induction step and establishes \eqref{eq:BetaUpperBound} for all $n\geq 2$.
This also completes last part of the proof of Theorem \ref{thm:BetaAsym}.

\section{Asymptotics for the Variance}\label{sec:variance}

To complete the proof of Theorem \ref{thm:Threshold Policy}, we only need to prove that one has the
asymptotic formula \eqref{eq:betavariance} for  $\Var [ \tau^*_n].$ This will first require an understanding
of the size of the threshold $t_n$, and we can get this from our bounds on $\beta(n)$ once we have
an asymptotic formula for $H$. The next lemma gives us what we need.

\begin{Lemma} \label{lem:G}
For $x \mapsto G(x)$ and $x \mapsto H(x)$ defined by \eqref{eq:gGdef} and \eqref{eq:Hdef}, we have
for $x \to \infty$ that
\begin{equation} \label{eq:G_formula}
G(x) = ( x^{1/2} + 2^{-1/2} )^2 \{ 1 + O({1}/{x}) \} \quad \text{and}
\end{equation}
\begin{equation} \label{eq:H_formula}
H(x) = \sqrt{\frac{2}{x}} \{ 1 + O(x^{-1/2}) \}.
\end{equation}
\end{Lemma}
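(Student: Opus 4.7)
\smallskip

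\noindent\textbf{Proof proposal.} The plan is to work directly with the first-order optimality condition for $H(x)$ and then substitute back into $g$ to read off $G(x)$. Differentiating $g(x,t) = 1/t - (x/t)\log(1-t)$ with respect to $t$, multiplying through by $t^2$, and equating to zero gives the implicit equation
\begin{equation*}
\varphi(t) := \frac{t}{1-t} + \log(1-t) = \frac{1}{x}
\end{equation*}
for $t = H(x)$. The function $\varphi$ is smooth, strictly increasing on $(0,1)$, and vanishes at $t=0$, so the equation has a unique root $t = H(x)$ in $(0,1)$, and $H(x) \downarrow 0$ as $x\to\infty$. Moreover $\varphi'(t) = t/(1-t)^2$, so $H$ is differentiable in $x$.

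Next I would use the Taylor expansion
\begin{equation*}
\varphi(t) = \sum_{k=2}^\infty \frac{k-1}{k}\, t^k = \frac{t^2}{2} + \frac{2t^3}{3} + \frac{3t^4}{4} + \cdots \qquad (0\le t<1),
\end{equation*}
which is immediate from combining the geometric series for $t/(1-t)$ with the series for $\log(1-t)$. Substituting into $\varphi(H(x)) = 1/x$ gives $H(x)^2/2 = 1/x + O(H(x)^3)$, and since $H(x) \to 0$ we first conclude that $H(x) \sim \sqrt{2/x}$; bootstrapping one step (using the cubic correction $(2/3)H(x)^3 = O(x^{-3/2})$) then yields
\begin{equation*}
H(x)^2 = \frac{2}{x}\bigl\{1 + O(x^{-1/2})\bigr\},
\end{equation*}
which is \eqref{eq:H_formula} after taking square roots.

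For $G(x)$ I would plug $t = H(x)$ back into $g$ using the complementary expansion
\begin{equation*}
g(x,t) = \frac{1}{t} - \frac{x}{t}\log(1-t) = \frac{1}{t} + x + \frac{xt}{2} + \frac{xt^2}{3} + \frac{xt^3}{4} + \cdots
\end{equation*}
With $t = \sqrt{2/x}(1 + O(x^{-1/2}))$ one has $1/t = \sqrt{x/2} + O(1)$ and $xt/2 = \sqrt{x/2} + O(1)$, so these two leading terms combine to $\sqrt{2x} + O(1)$. The remaining terms $xt^2/3 + xt^3/4 + \cdots$ contribute $O(1)$ in total because $xt^k = O(x \cdot x^{-k/2})$ is summable from $k=2$. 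Hence
\begin{equation*}
G(x) = x + \sqrt{2x} + O(1) = \bigl(x^{1/2} + 2^{-1/2}\bigr)^2 + O(1) = \bigl(x^{1/2} + 2^{-1/2}\bigr)^2\bigl\{1 + O(1/x)\bigr\},
\end{equation*}
where the last step uses $(x^{1/2}+2^{-1/2})^2 \ge x$. This gives \eqref{eq:G_formula}.

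The proof is essentially bookkeeping: the only real care required is keeping the error terms in the expansions of $\varphi(t)$ and $-\log(1-t)/t$ consistent across the two calculations so that the implicit bound on $H(x)$ is sharp enough (i.e., relative error $O(x^{-1/2})$) to yield only an absolute $O(1)$ error when propagated into $g(x, H(x))$. The sole potential pitfall is the cancellation between $1/t$ and $xt/2$ at their common order $\sqrt{x}$; a naive estimate $H(x) = \sqrt{2/x}(1+o(1))$ would leave a $o(\sqrt{x})$ error in $G(x)$ rather than the required $O(1)$, so one must do the bootstrap for $H$ to one extra order before inserting it into $g$.
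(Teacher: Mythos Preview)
Your proposal is correct and follows essentially the same route as the paper: both derive the first-order condition $\tfrac{t}{1-t}+\log(1-t)=\tfrac{1}{x}$, expand it as $\sum_{k\ge 2}\tfrac{k-1}{k}t^k$, bootstrap once to obtain $H(x)=\sqrt{2/x}\{1+O(x^{-1/2})\}$, and then substitute into the series $g(x,t)=\tfrac{1}{t}+x+\tfrac{xt}{2}+\cdots$ to read off $G(x)=x+\sqrt{2x}+O(1)$. Your closing remark about needing the $O(x^{-1/2})$ relative error in $H$ before the substitution (to avoid losing control of the $\sqrt{x}$-order cancellation between $1/t$ and $xt/2$) is a helpful clarification that the paper leaves implicit.
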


\begin{proof}
For any fixed $x \geq 1$ we have $g(t,x) \to \infty$ when $t \to 0$ or $t \to 1$, so the minimum of $g(t,x)$ is obtained at an
interior point $0 <t <1$. Computing the $t$-derivative $g_t(t,x)$ gives us
\[
g_t(t,x) = -\frac{1}{t^2} - \frac{x}{t^2} \log(\frac{1}{1-t}) + \frac{x}{t(1-t)},
\]
so the first-order condition $g_t(t,x) = 0$ implies that at the minimum we have the equality
\[
\frac{1}{t^2} = - \frac{x}{t^2} \log(\frac{1}{1-t}) + \frac{x}{t(1-t)}.
\]
Writing this more informatively as
\begin{equation} \label{eq:rootg}
\frac{1}{x} = \log(1-t) + \frac{t}{1-t} = \frac{t^2}{2} + \sum_{i=3}^{\infty} \frac{i-1}{i} t^i,
\end{equation}
we see the right-hand side is monotone in $t$, so there is a unique value ${t_*}={t_*}(x)$
that solves \eqref{eq:rootg} for $t$.
The last sum on the right-hand side of \eqref{eq:rootg} tells us that
\[
 \frac{1}{2} {t^2_*}  \leq  \frac{1}{x}  \quad \text{or, equivalently, } \quad {t_*} \leq \sqrt{\frac{2}{x}},
\]
and when we use these bounds in (\ref{eq:rootg}) we have
\[
\frac{{t^2_*}}{2} \leq \frac{1}{x} \leq \frac{{t^2_*}}{2} + \sum_{i=3}^{\infty} \Big(\frac{2}{x}\Big)^{i/2} \leq \frac{{t^2_*}}{2} + O(x^{-3/2}).
\]
Solving these inequalities for ${t_*}$, we then have by the definition \eqref{eq:Hdef} of $H$ that
\begin{equation}\label{HformulaAgain}
H(x) = {t_*} = \sqrt{\frac{2}{x}} \{ 1 + O(x^{-1/2}) \}.
\end{equation}
Finally, to confirm the approximation \eqref{eq:G_formula},
we substitute $H(x) = {t_*}$ into the definition \eqref{eq:gGdef} of $G$ and use the
asymptotic formula \eqref{HformulaAgain} for $H(x)$ to compute
\begin{align*}
G(x) = g(x,{t_*}) &= \frac{1}{{t_*}} + \frac{x}{{t_*}} \log\Big(\frac{1}{1-t_*}\Big)
= \frac{1}{{t_*}}
\Big(1 + x \sum_{i=1}^{\infty} \frac{t^i_*}{i} \Big) \\
&=
\frac{1}{{t_*}} \{ 1 + x {t_*} + \frac{x {t^2_*}}{2} + O(x {t_*}^3)\} =
\frac{1}{{t_*}}(x {t_*} + 1 + \frac{x {t^2_*}}{2})  + O(1) \\
&= x +  2\sqrt{\frac{x}{2}} + O(1) = \Big( x^{1/2} + 2^{-1/2} \Big)^2 \{ 1 + O(1/x) \},
\end{align*}
and this completes the proof of the lemma.
\end{proof}

The recursion \eqref{tnDef} tells us that $t_n=H(\beta(n))$ and the upper and lower bounds \eqref{eq:betalimit} of Theorem \ref{thm:BetaAsym}
tell us that $\beta(n)=n^2/2+O(n \log n)$, so by the asymptotic formula \eqref{eq:H_formula} for $H$ we have
\begin{equation}\label{eq:tnasymptotics}
t_n = \frac{2}{n} + O(n^{-2}{\log n}).
\end{equation}
To make good use of this formula we only need two more tools.
First, we need to note that the stopping time $\tau_n^*$ satisfies a natural distributional identity. This will lead in turn to
a recursion from which we can extract  the required asymptotic formula for $v(n) =\Var(\tau_n^*)$.

If
$t_n$ is the threshold value defined by the recursion \eqref{tnDef},
we let $\gamma(t_n)$ denote a geometric random variable of parameter $p=t_n$ and we let $U(t_n)$
denote a random variable with the uniform distribution on
the interval $[0,t_n]$. Now, if we take the random variables $\gamma(t_n)$, $U(t_n)$,
and $\tau_{n-1}$ to be independent, then we have the distributional identity,
\begin{equation}\label{eq:RVrepr}
\tau_n^* \stackrel{d}{=}  \gamma(t_n) + \frac{\tau_{n-1}^*}{(1-U(t_n))},
\end{equation}
and this leads
to a useful recursion for the variance of $\tau_n^*$. To set this up, we first put
$$R(t)=(1-U(t))^{-1}$$
where $U(t)$ is uniformly distributed on $[0,t]$, and we note
\begin{equation*}\label{eq:ERsq}
\E[R(t)]= - t^{-1} \log(1-t)=1+t/2+t^2/3+O(t^3);
\end{equation*}
moreover, since $\E[R^2(t)]= (1-t)^{-1} = 1+t +t^2+O(t^3)$, we also have
\begin{equation}\label{eq:VarR}
\Var[R(t)]= (1-t)^{-1} - t^{-2} \log^2(1-t)= \frac{t^2}{12} + O(t^3).
\end{equation}

\begin{Lemma}[Approximate Variance Recursion] \label{lem:var_recursion}
For the variance $v(n) := \Var(\tau_n^*)$ one has the approximate recursion
\begin{equation}\label{vn-recursion}
v(n) =  \Big\{1 + \frac{2}{n} + O(n^{-2} \log n ) \Big\} v(n-1) + \frac{1}{3}n^2 + O(n \log n).
\end{equation}
\end{Lemma}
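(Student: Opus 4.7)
The plan is to exploit the distributional identity \eqref{eq:RVrepr} together with the independence of $\gamma(t_n)$, $U(t_n)$, and $\tau_{n-1}^*$. Since $\gamma(t_n)$ is independent of the product $R(t_n)\tau_{n-1}^*$, the variance splits as
\[
v(n) = \Var(\gamma(t_n)) + \Var(R(t_n)\tau_{n-1}^*).
\]
For the second summand I would apply the standard formula $\Var(XY)=\E[X^2]\Var(Y)+(\E[Y])^2\Var(X)$ for independent $X,Y$ with $X=R(t_n)$ and $Y=\tau_{n-1}^*$, obtaining the exact identity
\[
v(n) = \Var(\gamma(t_n)) + \E[R(t_n)^2]\,v(n-1) + \beta(n-1)^2\,\Var(R(t_n)).
\]

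The remainder of the argument is asymptotic expansion. Using $\Var(\gamma(t))=(1-t)/t^2$, $\E[R(t)^2]=(1-t)^{-1}$, and $\Var(R(t))=t^2/12+O(t^3)$ from \eqref{eq:VarR}, together with $t_n=2/n+O(n^{-2}\log n)$ from \eqref{eq:tnasymptotics} and $\beta(n-1)=(n-1)^2/2+O(n\log n)$ from Theorem \ref{thm:BetaAsym}, I would expand term by term. This yields $\Var(\gamma(t_n))=n^2/4+O(n\log n)$, $\E[R(t_n)^2]=1+2/n+O(n^{-2}\log n)$, and $\Var(R(t_n))=1/(3n^2)+O(n^{-3}\log n)$. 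The crucial arithmetic is that
\[
\beta(n-1)^2\,\Var(R(t_n)) = \frac{(n-1)^4}{12\,n^2} + O(n\log n) = \frac{n^2}{12} + O(n\log n),
\]
so that the two nonrecursive contributions combine to $n^2/4+n^2/12 = n^2/3$, with the desired error, and the coefficient of $v(n-1)$ is exactly $\E[R(t_n)^2]=1+2/n+O(n^{-2}\log n)$. This is precisely \eqref{vn-recursion}.

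The delicate step, and the main obstacle, is the bookkeeping inside $\beta(n-1)^2\,\Var(R(t_n))$: here a factor of order $n^4$ multiplies a factor of order $n^{-2}$, so the subleading terms in the expansion of $\Var(R(t_n))$ and of $\beta(n-1)$ have to be tracked to enough accuracy that the error of the product remains $O(n\log n)$. Once the expansion $\Var(R(t))=t^2/12+O(t^3)$ is pushed through accurately and the error $O(n\log n)$ in $\beta(n-1)$ is squared correctly, the cross-terms land within the tolerated error, and the recursion follows by summing the three contributions.
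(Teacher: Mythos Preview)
Your proposal is correct and follows essentially the same approach as the paper: the same variance split via \eqref{eq:RVrepr}, the same product-variance identity $\Var(R\tau_{n-1}^*)=\E[R^2]v(n-1)+\Var(R)\beta(n-1)^2$ (which the paper derives in \eqref{eq:2ndshoe}), and the same asymptotic expansions yielding $n^2/4+n^2/12=n^2/3$ for the nonrecursive part. There is no substantive difference between your argument and the paper's.
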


\begin{proof}
By independence of the random variables on the right side of \eqref{eq:RVrepr}, we have
\begin{equation}\label{eq:varFirstStep}
v(n) = \Var(\gamma(t_n)) + \Var[R(t_n) {\tau_{n-1}^*}].
\end{equation}
From \eqref{eq:tnasymptotics} we have $t_n = 2/n + O(n^{-2}{\log n})$, so for the first summand we have
\begin{equation}\label{eq:vargamma}
\Var(\gamma(t_n))=\frac{1}{t_n^2} - \frac{1}{t_n} = \frac{1}{4} n^2 + O(n \log n).
\end{equation}
To estimate the second summand, we first use the complete variance formula and independence to get
\begin{align}
\Var(R(t_n) \tau_{n-1}^*) &= \E[R^2(t_n)] \E[(\tau_{n-1}^*)^2] - \E[R(t_n)]^2 (\E[\tau_{n-1}^*])^2  \notag\\
&=\E[R^2(t_n)] v(n-1) + \Var[R(t_n)] (\E[\tau_{n-1}^*])^2. \label{eq:2ndshoe}
\end{align}
Now from \eqref{eq:tnasymptotics} and \eqref{eq:VarR} we have
$$
\Var[R(t_n)] = \frac{1}{3} n^{-2} +O(n^{-3} \log n),
$$
and from \eqref{eq:betalimit} we have
$$
(\E[\tau_{n-1}^*])^2 = \Big\{\frac{1}{2}(n-1)^2+ O(n \log n) \Big\}^2= \frac{1}{4} n^4 +  O(n^3 \log n),
$$
so from \eqref{eq:varFirstStep}, \eqref{eq:vargamma} and \eqref{eq:2ndshoe} we get
\begin{align*}
v(n) 
= \{1 + \frac{2}{n} + O(n^{-2} \log n)\} v(n-1) + \frac{1}{3}n^2 + O(n \log n),
\end{align*}
and this completes the proof of \eqref{vn-recursion}.
\end{proof}

To conclude the proof of Theorem \ref{thm:Threshold Policy},
it only remains to show that the approximate recursion \eqref{vn-recursion}
implies the asymptotic formula
\begin{equation}\label{eq:betavariance2}
v(n)=\frac{1}{3} n^2(n+1) + O(n^2 \log^\alpha n) \quad \text{for } \alpha >2.
\end{equation}
If we define $r(n)$ by setting $v(n)= 3^{-1}n^2(n+1) + r(n)$, then substitution  of $v(n)$ into \eqref{vn-recursion}
gives us a recursion for $r(n)$,
$$
r(n) = \Big\{ 1 + \frac{2}{n} +O(n^{-2} \log n) \Big\} r(n-1) +O(n \log n).
$$
We then consider the normalized values $\hat{r}(n)=r(n)/(n^2 \log^\alpha(n) )$, and we note they satisfy the recursion
\begin{equation*}\label{eq:AdmittingDefeat}
\hat{r}(n) = \{ 1 + O(n^{-2}) \} \hat{r}(n-1) +O(n^{-1} \log^{\alpha-1} n).
\end{equation*}
This is a recursion of the form
$\hat{r}(n+1)= \rho_n \hat{r}(n) + \epsilon_n$,
and one finds by induction its solution has the representation
$$
\hat{r}(n)=\hat{r}(0)\rho_0\rho_1\cdots\rho_{n-1} + \sum_{k=0}^{n-1} \epsilon_k \rho_{k+1}\cdots\rho_{n-1}.
$$
Here, the product of the ``evolution factors" $\rho_n$ is convergent and the sum of the ``impulse terms" $\epsilon_n$ is
finite, so the sequence $\hat{r}(n)$ is bounded, and,  consequently, \eqref{vn-recursion}
gives us \eqref{eq:betavariance2}. This completes the proof of the last part of Theorem \ref{thm:Threshold Policy}.

\section{Suboptimal Policies and a Blocking Inequality}\label{se:BlockInquality}

Several inequalities for $\beta(n)$ can be obtained
through the construction of suboptimal policies.
The next lemma illustrates this method with an inequality that leads to an alternative proof
of \eqref{eq:BetaNlowerbnd}, the uniform lower bound for $\beta(n)$.

\begin{Lemma}[Blocking Inequality] For nonnegative integers $n$ and $m$ one has the inequality
\begin{equation}\label{eq:BetaMN}
\beta(mn) \leq \min \{ m^2\beta(n), n^2\beta(m)\}.
\end{equation}
\end{Lemma}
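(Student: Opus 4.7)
The plan is to construct an explicit suboptimal selection policy for the size-$mn$ problem whose expected completion time is $m^2 \beta(n)$, and then, by a symmetric construction, a second suboptimal policy with expected time $n^2 \beta(m)$. Since $\beta(mn)$ is the infimum of $\E[\tau_{mn}]$ over all admissible policies, taking the minimum of the two resulting upper bounds yields \eqref{eq:BetaMN}.

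For the first bound, partition $[0,1]$ into $m$ consecutive subintervals $I_j = [(j-1)/m,\, j/m]$, $j=1,\ldots,m$. The policy operates in $m$ successive blocks. In block $j$, ignore every observation $X_i$ that does not fall in $I_j$ and apply to the sub-stream of observations that do fall in $I_j$ the optimal size-$n$ threshold policy from Lemma \ref{lm:Threshold}, selecting $n$ strictly increasing values $X_{\sigma_{j,1}} < \cdots < X_{\sigma_{j,n}}$ all lying in $I_j$. Because $I_j$ sits entirely above $I_{j-1}$, the concatenation of the $m$ blocks produces a strictly increasing subsequence of length $mn$, so this construction is an element of $\Pi(mn)$.

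To bound its expected completion time, I would reuse the thinning/scaling observation that drove the proof of Lemma \ref{lm:betaRecursion}. Conditional on the entry time into block $j$, the observations that land in $I_j$ form an i.i.d.\ stream uniform on $I_j$, separated by geometric time gaps of mean $m$. After the affine rescaling of $I_j$ to $[0,1]$, the expected duration of block $j$ is therefore exactly $m\,\beta(n)$, and this is independent of the starting time of the block. Summing over the $m$ blocks gives a total expected time of $m^2 \beta(n)$, which proves $\beta(mn) \leq m^2 \beta(n)$. The complementary bound $\beta(mn) \leq n^2 \beta(m)$ follows from the analogous construction using $n$ subintervals of length $1/n$, each filled by the optimal size-$m$ policy, producing $n$ blocks of expected duration $n\,\beta(m)$ apiece.

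No step here presents a genuine obstacle. The only point that requires a little care is the per-block scaling argument, but it is exactly the ``thinned sequence with geometric gaps of mean $1/(1-X_\tau)$'' observation from the proof of Lemma \ref{lm:betaRecursion}, specialized to the deterministic interval length $1/m$ (respectively $1/n$) and composed with the affine rescaling of the block to $[0,1]$.
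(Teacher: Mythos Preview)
Your proposal is correct and follows essentially the same approach as the paper: partition $[0,1]$ into $m$ equal subintervals, run the optimal size-$n$ policy on the thinned stream within each block in succession, and use the geometric-gap thinning argument to conclude that each block contributes expected time $m\beta(n)$, giving $\beta(mn)\le m^2\beta(n)$; then swap $m$ and $n$. The paper's proof is the same construction, stated with slightly less detail on the per-block scaling step than you provide.
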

\begin{proof}
First, we fix $n$ and we consider a policy $\pi^*$ that achieves the minimal expectation $\beta(n)$. The idea is to use
$\pi^*$ to build a suboptimal $\pi'$ policy for the selection of an increasing subsequence of length $mn$. We take
$X_i$, $i=1,2,\ldots$ to be a sequence of independent random variables with the uniform distribution on $[0,1]$, and we
partition $[0,1]$ into the subintervals $I_1=[0,1/m)$, $I_2=[1/m,2/m)$, ..., $I_m=[(m-1)/m,1]$.
We then define $\pi'$ by three
rules:
\begin{enumerate}[(i)]
\item
    Beginning with $i=1$ we say $X_i$ is feasible value if $X_i \in I_1$. If $X_i$ is feasible, we accept $X_i$ if $X_i$ would be accepted by the policy $\pi^*$ applied to the sequence of feasible values after we rescale those values to be uniform in $[0,1]$.
    We continue this way until the time $\tau'_1$ when we have selected $n$ values.

\item
    Next, beginning with $i=\tau'_1$, we follow the previous rule except that now we say $X_i$ is feasible value if $X_i \in I_2$. We continue
    in this way until time $\tau'_1+\tau'_2$ when $n$ additional increasing values have been selected.

\item
    We repeat this process $m-2$ more times for the successive intervals $I_3$, $I_4$,..., $I_m$.
\end{enumerate}
At time $\tau'_1+\tau'_2+\cdots+\tau'_m$, the policy $\pi'$ will have selected $nm$ increasing values. For each
$1 \leq j \leq m$ we have $\E[\tau'_j]=m\beta(n)$, so by suboptimality of $\pi'$ we have
$$
\beta(mn) \leq \E[\tau'_1+\tau'_2+ \cdots + \tau'_m]=m^2 \beta (n).
$$
We can now interchange the roles of $m$ and $n$,  so the proof of the lemma is complete.
\end{proof}

The blocking inequality \eqref{eq:BetaMN} implies that even the crude asymptotic relation
$\beta(n)= n^2 / 2 + o(n^2)$  is strong enough to imply the uniform lower bound
$ n^2 / 2 \leq \beta(n)$. Specifically, one simply notes from \eqref{eq:BetaMN} and $\beta(n)= n^2 / 2 + o(n^2)$ that
$$
\frac{\beta(mn)}{(mn)^2} \leq \frac{\beta(n)}{n^2} \quad \text{and} \quad \lim_{m \rightarrow \infty} \frac{\beta(mn)}{(mn)^2}= \frac{1}{2} .
$$
This third derivation of the uniform bound $ n^2 / 2 \leq \beta(n)$
seems to have almost nothing in common with the
proof by induction that was used in the proof of Lemma \ref{lm:SharpLowerBoundonBeta}. Still, it does require the bootstrap bound
$\beta(n)= n^2 / 2 + o(n^2)$, and this does require at least some of the machinery of Lemma \ref{lem:ind}.

\section{Duality and the Size-Focused Selection Problem}\label{sec:DualProblems}

In the online \emph{size-focused} selection problem one considers
a set of policies $\Pi_s(n)$ that depend on the size $n$ of a sample $\{X_1,X_2, \ldots, X_n\}$,
and the goal is to make sequential selections in order to maximize the expected size of
the selected increasing subsequence. More precisely, a
policy $\pi_n \in \Pi_s(n)$ is determined by stopping times $\tau_i$, $i=1,2, ...$ such that
$1\leq \tau_1 < \tau_2 < \cdots < \tau_k \leq n$ and $X_{\tau_1} \leq  X_{\tau_2} \leq \cdots \leq X_{\tau_k}$.
The random variable of interest is
$$
L^o_n(\pi_n)=\max  \{ k:\;  X_{\tau_1} <  X_{\tau_2} < \cdots < X_{\tau_k}\text{ where }
                             1\leq \tau_1 < \tau_2 < \cdots < \tau_k \leq n \}, \notag
$$
and most previous analyses have focused on the asymptotic behavior of
\begin{equation*}\label{eq:OptimalThresholdStrategy2}
\ell (n) := \max_{\pi_n \in \Pi_s(n)} \E[L^o_n(\pi_n)].
\end{equation*}
For example, \citet{SamSte:AP1981} found that
$\ell (n) \sim \sqrt{2n}$,
but now a number of refinements of this are known. Our goal here is to
show how some of these refinements follow from the preceding theory.

\subsection*{\sc Uniform Upper Bound for $\ell(n)$ via Duality}\subsectionnewline

Perhaps the most elegant refinement of $\ell (n) \sim \sqrt{2n}$ is the following uniform upper bound
that follows independently from related analyses of \citet{BruRob:AAP1991} and \citet{Gne:JAP1999}.

\citet{BruRob:AAP1991} obtained the upper bound \eqref{eq:BRGagain} by exploiting the equivalence of the size-focused
monotone subsequence problem with a special knapsack problem that we will briefly revisit in Section \ref{sec:Conclusion}.
On the other hand, \citet{Gne:JAP1999} showed that  \eqref{eq:BRGagain}
can be proved by an optimization argument like the one used to prove \eqref{eq:optimization-problem}.
Here, we give a third argument with the curious feature that one gets a sharp bound that holds
for all $n\geq 1$ from an apparently looser relation that only holds approximately for large $n$.

\begin{Proposition}[Uniform Upper Bound]\label{prop:UnfUpper}
For all $n\geq 1$, one has
\begin{equation}\label{eq:BRGagain}
\ell (n)  \leq \sqrt{2n}.
\end{equation}
\end{Proposition}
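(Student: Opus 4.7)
The plan is to carry out a Cauchy--Schwarz coupling of the two natural ``budgets'' available to any size-focused policy, in direct parallel with the convex-programming derivation of $\beta(n)\geq n^2/2$ in Section~\ref{sec:mean}. Given $\pi\in\Pi_s(n)$, I would first extend $\pi$ to an infinite-horizon policy $\tilde\pi$ by continuing to select past time $n$ in any way that makes the next selection almost surely happen (greedy acceptance past the current value is convenient). Each selection time $\tilde\tau_j$ is then a.s.\ finite, and $L:=L^o_n(\pi)=\max\{j:\tilde\tau_j\leq n\}$ is a bounded stopping time in the filtration $\mathcal{G}_j:=\mathcal{F}_{\tilde\tau_j}$.

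With $\mu_j=\E[\tilde\tau_j-\tilde\tau_{j-1}\mid\mathcal{G}_{j-1}]$ and $\nu_j=\E[X_{\tilde\tau_j}-X_{\tilde\tau_{j-1}}\mid\mathcal{G}_{j-1}]$, optional stopping at the bounded stopping time $L$ (applied to the martingale increments $(\tilde\tau_j-\tilde\tau_{j-1})-\mu_j$ and $(X_{\tilde\tau_j}-X_{\tilde\tau_{j-1}})-\nu_j$) would immediately deliver the two budget inequalities
\[
\E\Big[\sum_{j=1}^L\mu_j\Big]=\E[\tilde\tau_L]\leq n, \qquad \E\Big[\sum_{j=1}^L\nu_j\Big]=\E[X_{\tilde\tau_L}]\leq 1.
\]

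The crux of the argument will be a pointwise bound $2\mu_j\nu_j\geq 1$. I plan to reduce to the case where the per-arrival acceptance region past $\tilde\tau_{j-1}$ is an interval $[X_{\tilde\tau_{j-1}},X_{\tilde\tau_{j-1}}+\lambda_i]$ via a standard exchange argument (smaller accepted values yield a stochastically larger residual budget). Writing $q_i=\prod_{l<i}(1-\lambda_l)$ and summing by parts then yields $\mu_j=\sum_i q_i$ and $2\nu_j=\sum_i\lambda_i^2 q_i$, and a single Cauchy--Schwarz
\[
2\mu_j\nu_j=\Big(\sum_i q_i\Big)\Big(\sum_i\lambda_i^2 q_i\Big)\geq\Big(\sum_i\lambda_i q_i\Big)^2=1
\]
closes this step, with the last equality from $\sum_i\lambda_i q_i=\sum_i(q_i-q_{i+1})=1$ guaranteed by the greedy extension.

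Putting the pieces together, a pointwise Cauchy--Schwarz gives $L=\sum_{j\leq L}1\leq\sum_{j\leq L}\sqrt{2\mu_j\nu_j}\leq\sqrt{2(\sum_{j\leq L}\mu_j)(\sum_{j\leq L}\nu_j)}$, and a second Cauchy--Schwarz at the level of expectations ($\E[\sqrt{XY}]\leq\sqrt{\E[X]\E[Y]}$) then produces $\E[L]\leq\sqrt{2n\cdot 1}=\sqrt{2n}$, which is \eqref{eq:BRGagain}. The main obstacle I anticipate is making the exchange argument for interval acceptance regions rigorous at the level of fully general (time-dependent, history-adapted) acceptance rules. The ``curious feature'' previewed just before the proposition surfaces naturally: the two Cauchy--Schwarz steps are individually tight only for threshold policies with a single $\lambda$, yet their combination yields a uniformly sharp bound for every $n\geq 1$.
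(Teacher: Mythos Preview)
Your approach is genuinely different from the paper's. The paper does \emph{not} argue directly via budgets and Cauchy--Schwarz; instead it runs a duality/blocking construction. Fixing $n$ and a large auxiliary parameter $k$, it partitions $[0,1]$ into $\sim k$ slabs, runs the optimal size-focused policy in each slab to manufacture a (suboptimal) time-focused policy that selects $N_k\approx k\,\ell(n)$ increasing values, and compares the resulting expected time with $\beta(N_k)$. Feeding in $\beta(N)=N^2/2+O(N\log N)$ from Theorem~\ref{thm:BetaAsym} and letting $k\to\infty$ yields $\ell(n)^2/2\leq n$. The ``curious feature'' the paper advertises is precisely this: a bound sharp for every fixed $n$ is squeezed out of an \emph{asymptotic} estimate on $\beta$ by sending an auxiliary parameter to infinity. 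Your reinterpretation of that phrase in terms of two Cauchy--Schwarz steps is not what the paper has in mind. What your route would buy, if repaired, is a self-contained argument that does not invoke the time-focused machinery at all --- essentially the Gnedin proof the paper cites.

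There is, however, a real gap in your sketch. The random index $L=\max\{j:\tilde\tau_j\leq n\}$ is \emph{not} a stopping time for $(\mathcal G_j)=(\mathcal F_{\tilde\tau_j})$: one has $\{L\leq j\}=\{\tilde\tau_{j+1}>n\}$, which depends on observations after $\tilde\tau_j$. (It is $L+1$ that is a $(\mathcal G_j)$-stopping time.) Consequently your optional-stopping identity $\E[\sum_{j\leq L}\mu_j]=\E[\tilde\tau_L]$ fails; conditioning on $\{\tilde\tau_j\leq n\}$ biases the wait $\tilde\tau_j-\tilde\tau_{j-1}$ \emph{downward}, so one only gets $\E[\tilde\tau_L]\leq\E[\sum_{j\leq L}\mu_j]$, which is the wrong direction for the time budget. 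With a non-greedy extension one can in fact make $\E[\sum_{j\leq L}\mu_j]$ exceed $n$ (take $n=1$, accept $X_1$ iff $X_1<1/2$, and extend with a tiny threshold). The greedy extension may rescue the inequality, but not via the argument you give.

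The clean repair --- and this is essentially Gnedin's proof --- is to index by \emph{time} rather than by selection, which removes the stopping-time issue entirely. For $i=1,\dots,n$ let $a_i$ be the $\mathcal F_{i-1}$-measurable Lebesgue measure of the acceptance set at step $i$; then $\E[L]=\sum_{i=1}^n\E[a_i]$, while the Wald bound \eqref{eq:ByWald} (applied per step rather than per selection) gives that the expected gain in the running maximum at step $i$ is at least $a_i^2/2$, so $\sum_{i=1}^n\E[a_i^2]\leq 2$. One Cauchy--Schwarz, $\bigl(\sum_i\E[a_i]\bigr)^2\leq n\sum_i\E[a_i]^2\leq n\sum_i\E[a_i^2]\leq 2n$, then finishes. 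This also makes your ``exchange to intervals'' step unnecessary: the inequality $\E[\text{gain}_i\mid\mathcal F_{i-1}]\geq a_i^2/2$ holds for an arbitrary measurable acceptance set above the current value.
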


As we noted above, this proposition is now well understood, but it still seems instructive to see how it can be derived from
$\beta(n) = ({1}/{2}) n^2 + O(n \log n)$.
The basic idea is that one exploits duality with a suboptimality argument like the one used in Section \ref{se:BlockInquality},
although, in this case a bit more work is required.

We fix $n$, and, for a much larger integer $k$,  we set
\begin{equation}\label{eq:Nk-AND-rk}
N_k = \lfloor (k-2k^{2/3})\ell(n) \rfloor \quad \text{and} \quad r_k= \lfloor k-k^{2/3} \rfloor.
\end{equation}
The idea of the proof is to give an algorithm that is guaranteed to select from $\{X_1, X_2, \ldots \}$
an increasing subsequence of length $N_k$. If $T_k$ is the number of the elements that the algorithm inspects
before returning the increasing subsequence, then by the definition of $\beta(\cdot)$ we have $\beta(N_k) \leq \E[T_k]$.
One then argues that \eqref{eq:BRGagain} follows from this relation.

We now consider $[0,1]$ and for $1\leq i \leq r_k,$ we consider the disjoint intervals $I_i=[(i-1)/k, i/k)$ and a
final ``reserve" interval $I^*=[r_k/k,1]$ that is added to complete the partition of $[0,1]$.
Next, we let $\nu(1)$ be the first integer such that $$\mathcal{S}_1 := \{ X_1, X_2, \ldots, X_{\nu(1)} \} \cap I_1$$
has cardinality $n$, and for each $i >1$ we define  $\nu(i)$ to be least integer greater $\nu(i-1)$ for which
the set $\mathcal{S}_i := \{ X_{\nu(i-1)+1}, X_{\nu(i-1)+2}, \ldots, X_{\nu(i)} \} \cap I_i$ has cardinality $n$.
By Wald's lemma and \eqref{eq:Nk-AND-rk} we have
\begin{equation}\label{eq:Wall1}
\E [ \nu(r_k) ] = n k r_k \quad \text{where } r_k= \lfloor k-k^{2/3} \rfloor.
\end{equation}

Now, for each $1 \leq i \leq n$, we run the \emph{optimal size-focused}
sequential selection algorithm on $\mathcal{S}_i$, and we let
$L(n,i)$ be the length of the subsequence that one obtains.
The random variables $L(n,i)$, $1 \leq i \leq r_k,$ are independent,
identically distributed, and each has mean equal to $\ell(n)$.
We then set
$$
\mathcal{L}(n,r_k) = L(n,1)+L(n,2) + \cdots + L(n,r_k),
$$
and we note that if  $\mathcal{L}(n,r_k)\geq N_k$, for $N_k$ as defined in \eqref{eq:Nk-AND-rk},
then we have extracted an increasing subsequence of length at least $N_k$;
in this case, we halt the procedure.

On the other hand if $\mathcal{L}(n,r_k)< N_k$, we need to send in the reserves.
Specifically, we recall that $I^*=[r_k/k,1]$ and
we consider the post-$\nu(r_k)$ reserve subsequence
$$
\mathcal{S}^* := \{ X_i: i > \nu(r_k) \, \,  \text{and} \, \, X_i \in I^*\}.
$$
We now rescale the elements of $\mathcal{S}^*$  to the unit interval, and we
run the optimal \emph{time-focused} algorithm on $\mathcal{S}^*$ until we get an increasing sequence of length $N_k$.
If we let $R(n,k)$ denote the number of observations from $\mathcal{S}^*$ that are examined in this case,
then we have $\E[R(n,k)]=\beta(N_k)$ by the definition of $\beta$. Finally, since $I^*$ has length at least $k^{-1/3}$, the expected number
of elements of $\{X_i: i > \nu(r_k) \}$ that need to be inspected before we have selected our increasing subsequence of
length $N_k$ is bounded above
by $k^{1/3}\beta(N_k)$.

The second phase of our procedure may seem wasteful, but one rarely needs to use the reserve subsequence.
In any event, our procedure does guarantee that we find an increasing subsequence of length $N_k$ in
a finite amount of time $T_k$. By \eqref{eq:Wall1} and the upper bound $k^{1/3}\beta(N_k)$ on the incremental cost
when one needs to use the reserve subsequence, we have
\begin{equation}\label{eq:BasicBetaBound}
\beta(N_k) \leq \E [T_k] \leq k n r_k + \{k n r_k+k^{1/3}\beta(N_k)\} \P(\mathcal{L}(n,r_k)< N_k),
\end{equation}
where, as noted earlier, the first inequality comes from the definition of $\beta$.

The summands of $\mathcal{L}(n,r_k)$ are uniformly bounded by $n$ and $\E [\mathcal{L}(n,r_k) ]= r_k \ell(n)$, so by  the
definition \eqref{eq:Nk-AND-rk} of $N_k$ and $r_k$ we see from Hoeffding's inequality that
\begin{align}\label{eq:ExponentialBound}
\P(\mathcal{L}(n,r_k)< N_k) &\leq \P\Big(\mathcal{L}(n,r_k) -\E[\mathcal{L}(n,r_k)] < -( k^{2/3} - 1 )\ell(n) \Big) \\
& 
\leq \exp\{ - A_n k^{1/3}\}, \notag
\end{align}
for constants $A_n$, $K_n$, and all $k \geq K_n$.
The exponential bound \eqref{eq:ExponentialBound} tells us that for each $n$ there is a constant $C_n$
such that the last summand in  \eqref{eq:BasicBetaBound} is bounded by $C_n$ for all $k$.
By the bounds \eqref{eq:betalimit} of Theorem \ref{thm:BetaAsym} we have $\beta(N_k)=(1/2) N_k^2 + O(N_k \log N_k)$,
and by \eqref{eq:Nk-AND-rk} we have
$$
N_k= (k-2k^{2/3})\ell(n) +O(1), \quad  r_k=k - k^{1/3} +O(1),
$$
so, in the end, our estimate \eqref{eq:BasicBetaBound} tell us
$$
\frac{1}{2} \ell^2(n) \{k^2 -2 k^{5/3} +4 k^{4/3}\} \leq k^2 n + o_n(k^2).
$$
When we divide by $k^2$ and let $k \rightarrow \infty$, we find $\ell(n) \leq \sqrt{2 n}$, just as we hoped.

\subsection*{\sc Lower Bounds for $\ell(n)$ and the Duality Gap}\subsectionnewline

One can use the time-focused tools to get a lower bound for $\ell(n)$, but in this case the slippage, or duality gap,
is substantial. To sketch the argument, we
first let $T_r$ denote the time required by the optimal time-focused selection policy to select $r$ values. We then
follow the $r$-target time-focused policy. Naturally, we stop if we have selected $r$ values, but if we have not selected $r$ values by time $n$,
then we quit, no matter how many values we have selected. This suboptimal strategy gives us the bound
$
r \P(T_r \leq n) \leq \ell(n),
$
and from this bound and Chebyshev's inequality,  we then have
\begin{equation}\label{eq:PostChebyshev}
r\{ 1- \Var(T_r)/(n-\E[T_r])^2 \} \leq \ell(n).
\end{equation}
If we then use the estimates \eqref{eq:betalimit} and \eqref{eq:betavariance} for  $\E [T_r]$ and $\Var[T_r]$ and optimize
over $r$, then \eqref{eq:PostChebyshev} gives us the lower bound $(2n)^{1/2} - O(n^{1/3})$.
However in this case the time-focused bounds and the duality argument leave a big gap.

Earlier, by different methods --- and for different reasons ---
\citet{RheTal:JAP1991} and \citet{Gne:JAP1999}  obtained the
lower bound $(2n)^{1/2} - O(n^{1/4}) \leq \ell(n)$.
Subsequently, \citet{BruDel:SPA2001} studied a continuous time interpretation
of the online increasing subsequence problem where the observations are presented to the decision maker at the arrival times
of a unit-rate Poisson process on the time interval $[0,t)$,
and, in this new formulation, they found the stunning lower bound $\sqrt{2t} - O(\log t)$.
Much later, \citet{ArlNguSte:SPA2015} showed by a de-Poissonization argument that the lower bound
of \citet{BruDel:SPA2001} can be used to obtain
$$
\sqrt{2n} - O(\log n) \leq \ell(n) \quad \quad \text{for all } n \geq 1
$$
under the traditional discrete-time model for sequential selection.
Duality estimates such as \eqref{eq:PostChebyshev}
seem unlikely to recapture this bound.

\section{Observations, Connections, and Problems}\label{sec:Conclusion}

A notable challenge that remains is that of understanding the asymptotic distribution of $\tau^*_n$,
the time at which one completes the selection of $n$ increasing values
by following the unique optimal policy $\pi^*$ that
minimizes the expected time $\E [ \tau^*_n ] = \beta(n)$.
By Theorems \ref{thm:BetaAsym} and \ref{thm:Threshold Policy} we know the behavior of the mean
$\E [ \tau^*_n ]$ and of the variance $\Var [\tau^*_n]$ for large $n$, but the asymptotic behavior of higher moments or
characteristic functions does not seem amenable to the methods used here.

Some second order asymptotic distribution theory should be available for $\tau^*_n$, but it is not likely to be easy.
Still, some encouragement can be drawn from the central limit theorem
for the size-focused selection problem
that was obtained by \citet{BruDel:SPA2004} under the Poisson model introduced in \citet{BruDel:SPA2001}.
Also, by quite different means, \citet{ArlNguSte:SPA2015} obtained the central limit theorem for the size-focused selection problem under the
traditional model discussed in Section \ref{sec:DualProblems}. Each of these developments depends on martingale arguments that
do not readily suggest any natural analog in the case of time-focused selection.

There are also several other size-focused selection problems where it may be informative to investigate the corresponding
time-focused problem. The most natural of these is probably
the unimodal subsequence selection problems where one considers the random variable
\begin{align*}
U^o_n(\pi_n)=\max  \{ k:\; & X_{\tau_1} <  X_{\tau_2} < \cdots < X_{\tau_t} > X_{\tau_{t+1}} > \cdots >  X_{\tau_k}, \text{ where}\\
                       & 1\leq \tau_1 < \tau_2 < \cdots < \tau_k \leq n \},
\end{align*}
and where, as usual, each $\tau_k$ is a stopping time. \citet{ArlottoSteele:2011} found that
$$\max_{\pi_n} \E[ U^o_n(\pi_n) ]\sim 2 \sqrt{n},\quad \quad \text{as } n \rightarrow \infty,$$
and an analogous time-focused selection problem is easy to pose. Unfortunately, it does not seem easy to frame
a useful analog of Lemma \ref{lem:ind}, so the time-focused analysis of the unimodal selection problem remails open.

For another noteworthy possibility one can consider the time-focused problem in a multidimensional setting.
Here one would take the random variables
$X_i$, $i=1,2, \ldots$, to have the uniform distribution on $[0,1]^d$, and the decision maker's
task is then to select as quickly as possible
a subsequence that is monotone increasing in each coordinate.
The dual, size-focused, version of this problem was studied by \citet{BarGne:AAP2000}
who characterized the asymptotic behavior of the optimal mean.

Finally, a further possibility is the analysis of a time-focused knapsack problem.
Here one considers a sequence $X_1, X_2, \ldots$ of independent item sizes
with common continuous distribution $F$,
and the decision maker's task is to select as quickly as possible a set of $n$ items for which the total size
does not exceed a given knapsack capacity $c$.
The analytical goal is then to estimate the size of the objective function
$$
\phi_{F,c}(n) = \min_{\tau_1, \ldots, \tau_n} \E\Big[ \tau_n : \sum_{k=1}^n X_{\tau_k} \leq c \Big],
$$
where, just as before, the decision variables $\tau_1< \tau_2<  \ldots, <\tau_n$ are stopping times.
The optimal mean $\phi_{F,c}(n)$ now depends on the model distribution $F$ and on the capacity parameter $c$.
Never the less, if $F$ represents the uniform distribution on $[0,1]$ and if one takes $c=1$, then the time-focused knapsack problem
is equivalent to the time-focused increasing subsequence problem; specifically, one has
$$
\phi_{{\rm Unif},1}(n) = \beta(n), \quad \quad \text{for all } n \geq 1.
$$
The dual, size-focused, version of this problem was first studied by
\citet{CofFlaWeb:AAP1987}, \citet{RheTal:JAP1991}, and \citet{BruRob:AAP1991}
who proved asymptotic estimates for the mean.
While the structure of the optimal policy is now well understood \citep[see, e.g.][]{PapRajKle:MS1996},
only little is known about the limiting distribution of the optimal number of size-focused knapsack selections.
From \citet{ArlottoGansSteele:OR2014} one has a suggestive upper bound on the variance,
but at this point one does not know for sure that this bound gives the principle term of the variance for large $n$.

\section*{Acknowledgements}

Elchanan Mossel acknowledges the support of NSF grants DMS 1106999 and CCF 1320105, ONR grant number N00014-14-1-0823
and grant 328025 from the Simons Foundation.

The authors also thank the referees for their useful suggestions, including a clarification of the proof of
Lemma \ref{lm:betaRecursion} and the alternative optimization proof of \eqref{eq:BetaNlowerbnd} given in Section \ref{sec:mean}.

\end{document}